\newtheorem{theorem}{Theorem}
\newtheorem{definition}{Definition}
\newtheorem{remark}{Remark}
\newtheorem{example}{Example}
\newtheorem{lemma}{Lemma}
\newenvironment{condition0}{\noindent\textbf{[B.0]} }
{}
\newenvironment{condition1}{\noindent\textbf{[A.0]} }
{}
\newenvironment{condition2}{\noindent\textbf{[A.1]} }
{}
\newenvironment{condition3}{\noindent\textbf{[A*.2]} }
{}
\newenvironment{condition4}{\noindent\textbf{[A*.3]} }
{}
\newenvironment{proof}{\noindent\textbf{Proof.}}{}
\newdimen\dummy
\oddsidemargin \addtolength{\dummy}{72pt}
\begin{document}

\title{On some probabilistic properties of periodic $GARCH$ processes}
\author{\textsc{Abdelouahab Bibi}$^{\text{*}}$ \and \textsc{Abdelhakim
Aknouche}$^{\text{**}}$ \\
$^{\text{*}}$D\'{e}partement de Math\'{e}matiques, Universit\'{e} Mentouri
de Constantine, Algeria\\
E-mail: abd.bibi@gmail.com\\
$^{\text{**}}$Facult\'{e} de Math\'{e}matiques, Universit\'{e} U.S.T.H.B.,
Algiers, Algeria\\
E-mail: aknouche\_ab@yahoo.com}
\date{}
\maketitle

\begin{abstract}
\noindent This paper examines some probabilistic properties of the class of
periodic $GARCH$ processes ($PGARCH$) which feature periodicity in
conditional heteroskedasticity. In these models, the parameters are allowed
to switch between different regimes, so that their structure shares many
properties with periodic $ARMA$ process ($PARMA$). We examine the strict and
second order periodic stationarities, the existence of higher-order moments,
the covariance structure, the geometric ergodicity and $\beta -$mixing of
the $PGARCH(p,q)$ process under general and tractable assumptions. Some
examples are proposed to illustrate the various concepts.

\noindent {\normalsize \textbf{Keywords.} Periodic }${\normalsize GARCH}$%
{\normalsize \ Processes; Periodic Stationarity; Geometric Ergodicity;
Higher-Order Moments.}

\noindent {\normalsize \textbf{AMS (2000) Subject Classification.} Primary
62M10; Secondary 62M05.}
\end{abstract}

\section{Introduction}

Consider a periodic $GARCH(p_{1},...,p_{s},q_{1},...,q_{s})$ process $\left(
x_{t}\right) _{t\in \mathbb{Z}}$ with period $s>0$ and orders $p=\left(
p_{1},...,p_{s}\right) $ and $q=\left( q_{1},...,q_{s}\right) $, defined on
some probability space $\left( \Omega ,\mathcal{A},P\right) $ with the
non-linear periodic difference equation: 
\begin{equation}
\forall t\in \mathbb{Z}\text{: }\left\{ 
\begin{array}{l}
x_{st+v}=\varepsilon _{st+v}\sqrt{h_{st+v}} \\ 
h_{st+v}=\alpha _{0}(v)+\sum\limits_{i=1}^{p_{v}}\alpha
_{i}(v)x_{st+v-i}^{2}+\sum\limits_{j=1}^{q_{v}}\beta _{j}(v)h_{st+v-j}%
\end{array}%
\text{ }\right.  \tag{1.1}  \label{Eq1_1}
\end{equation}%
where $\left( \varepsilon _{t}\right) _{t\in \mathbb{Z}}$ is a sequence of
independent and identically distributed (i.i.d.) random variables defined on
the same probability space $\left( \Omega ,\mathcal{A},P\right) $ such that $%
E\left\{ \varepsilon _{t}\right\} =E\left\{ \varepsilon _{t}^{3}\right\} =0,$
and $E\left\{ \varepsilon _{t}^{2}\right\} =1$ (these conditions are
obviously satisfied if $\left( \varepsilon _{t}\right) _{t\in \mathbb{Z}}$
is Gaussian).

In the difference equation $\left( \ref{Eq1_1}\right) $ $x_{st+v}$ refers to 
$x_{t}$ during the $v-th$ `season', $1\leq v\leq s$ of period $t$, $\alpha
_{0}(v),\alpha _{1}(v),...,$ $\alpha _{p_{v}}(v)$ and $\beta
_{1}(v),...,\beta _{q_{v}}(v)$ are the model coefficients at season $v$ such
that for all $v=1,...,s$, $\alpha _{0}(v)>0,$ $\alpha _{i}(v)\geq 0$, $%
i=1,...,p_{v}$, and $\beta _{j}(v)\geq 0$, $j=1,...,q_{v}$. Moreover, we
assume that $\varepsilon _{k}$ is independent of $x_{t}$ for $k>t$. We use
the periodic notations \ $\left( x_{st+v}\right) ,\left( \varepsilon
_{st+v}\right) ,\left( h_{st+v}\right) ,\left( \alpha _{i}(v),0\leq i\leq
p_{v}\right) $, and $\left( \beta _{i}(v),1\leq i\leq q_{v}\right) $ to
emphasize the periodicity in the model. There is no loss of generality in
taking $p_{v}$ and $q_{v}$ to be constant in $v$. If $p_{v}$ or $q_{v}$
change with $v$, one can set $p=\max\limits_{1\leq v\leq s}p_{v}$, $%
q=\max\limits_{1\leq v\leq s}q_{v}$ and take $\alpha _{k}(v)=0$ for $%
p_{v}<k\leq p$ and $\beta _{k}(v)=0$ for $q_{v}<k\leq q$, so in the sequel,
we shall consider the periodic $GARCH$ with constant orders $p$ and $q$.
Since Bollerslev and Ghysels $(1996)$, this type of non-linear models has
become an appealing tool for investigating both volatility and distinct
seasonal patterns, and has been applied in various disciplines such as
finance and monetary economics (see e.g. Bollerslev and Ghysels,$1996$ and
Franses and Paap, $2000$).

When we consider a periodic model as a data generating process, it is
important to find conditions ensuring the (periodic) stationarity,
ergodicity and the existence of higher moments for further statistical
analysis. Various probabilistic properties of standard $GARCH$ models have
been studied extensively by many authors (see e.g., Chen and An, $1998$,
Bougerol and Picard, $1992a$, $1992b$ and Carrasco and Chen, $2002$ and the
references therein). In the present paper, we focus on studying the
fundamental probabilistic properties of the $PGARCH$ process $\left(
x_{t}\right) _{t\in \mathbb{Z}}$ generated by $\left( \ref{Eq1_1}\right) $
so, in Section $2$, we present a vectorial representation from which we
derive some sufficient conditions for the strict stationarity. In Sections $%
3 $ and $4$, necessary and sufficient conditions for the second order
stationarity and the existence of higher order moments are given. Section $5$
is devoted to covariance structure. In Section $6$ we provide conditions
under which strictly stationary solutions are exponential $\beta $-mixing
with finite higher order moments. We conclude in Section $7$.

Some notations are used throughout the paper: $I_{(k)}$ denotes the identity
matrix of order $k$ and $O_{(k\times l)}$ denotes the matrix of order $%
k\times l$ whose elements are zeroes, for simplicity we set $%
O_{(k)}:=O_{(k\times k)}$, $\rho \left( A\right) $ refers to the spectral
radius of a square matrix $A$, i.e., the maximum eigenvalue of a matrix $A$
in absolute value, $Vec\left( A\right) $ is the usual column-stacking vector
of the matrix $A$, $\left\Vert .\right\Vert $ refers to the standard
(Euclidean) norm in $\mathbb{R}^{n}$ or the uniform induced norm in the
space $\mathcal{M}(n)$ of $n\times n$ matrices, $\otimes $ denotes the
Kronecker product of matrices, and $A^{\otimes m}=A\otimes A\otimes
...\otimes A$ (\emph{m}$-$times), for any integer $m\geq 1$. For any $p\geq
1 $, $\mathbb{L}^{p}=\mathbb{L}^{p}\left( \Omega ,\mathcal{A},P\right) $
denotes the Hilbert space of random variables $X$ defined on the probability
space $\left( \Omega ,\mathcal{A},P\right) $ such that $\left\Vert
X\right\Vert _{p}=\left\{ E\left\vert X\right\vert ^{p}\right\}
^{1/p}<+\infty $. We also use the following property of matrix operation, $%
Vec\left( ABC^{\prime }\right) =(C\otimes A)Vec(B)$, where $^{\prime }$ is
the matrix transpose.

\section{The Markovian representation and strict stationarity}

Let $\left( x_{t}\right) _{t\in \mathbb{Z}}$ be a process conforming to the
model $\left( \ref{Eq1_1}\right) $. Setting $y_{st+v}=x_{st+v}^{2}$ and\ $%
\eta _{st+v}=\varepsilon _{st+v}^{2}$, we obtain \ from $\left( \ref{Eq1_1}%
\right) $ the following representation 
\begin{equation}
y_{st+v}=\sum\limits_{i=1}^{p}\alpha _{i}(v)\eta
_{st+v}y_{st+v-i}+\sum\limits_{j=1}^{q}\beta _{j}(v)\eta
_{st+v}h_{st+v-j}+\alpha _{0}(v)\eta _{st+v}.  \tag{2.1}  \label{Eq2_1}
\end{equation}

\noindent Equation $\left( \ref{Eq2_1}\right) $ is intractable when we want
to examine the probabilistic structure of this representation. Instead, we
will work with the corresponding state-space representation. Let $d=p+q$ and
define\ $\underline{\eta }_{t}=(\eta _{st+1},...,\eta _{st+s})^{\prime }$, $%
\underline{y}_{st+v}=(y_{st+v},...,y_{st+v-p+1},h_{st+v},...,h_{st+v-q+1})^{%
\prime }$ and $\underline{B}_{v}\left( \underline{\eta }_{t}\right) =(\alpha
_{0}(v)\eta _{st+v},0,...$ $,0,\alpha _{0}(v),0,...,0)^{\prime }$ as vectors
in $\mathbb{R}^{s}$, $\mathbb{R}^{d}$ and $\mathbb{R}^{d}$ respectively, and
set 
\begin{equation*}
\phi _{v}(\underline{\eta }_{t})=\left( 
\begin{array}{cc}
A_{v}(\underline{\eta }_{t}) & B_{v}(\underline{\eta }_{t}) \\ 
A_{v} & B_{v}%
\end{array}%
\right) _{d\times d}
\end{equation*}%
where%
\begin{equation*}
A_{v}(\underline{\eta }_{t})=\left( 
\begin{array}{cccc}
\alpha _{1}(v)\eta _{st+v} & \alpha _{2}(v)\eta _{st+v} & \ldots & \alpha
_{p}(v)\eta _{st+v} \\ 
1 & 0 & \ldots & 0 \\ 
0 & \ddots & \ddots & \vdots \\ 
0 & 0 & 1 & 0%
\end{array}%
\right) _{p\times p},B_{v}(\underline{\eta }_{t})=\left( 
\begin{array}{ccc}
\beta _{1}(v)\eta _{st+v} & \ldots & \beta _{q}(v)\eta _{st+v} \\ 
0 & \ldots & 0 \\ 
\vdots & \vdots & \vdots \\ 
0 & \cdots & 0%
\end{array}%
\right) _{p\times q}
\end{equation*}%
are$\ p\times p$ and $p\times q$ matrix valued polynomial functions of $%
\underline{\eta }_{t}$ and where%
\begin{equation*}
A_{v}=\left( 
\begin{array}{ccc}
\alpha _{1}(v) & \ldots & \alpha _{p}(v) \\ 
0 & \ldots & 0 \\ 
\vdots & \vdots & \vdots \\ 
0 & \cdots & 0%
\end{array}%
\right) _{q\times p},B_{v}=\left( 
\begin{array}{cccc}
\beta _{1}(v) & \beta _{2}(v) & \ldots & \beta _{q}(v) \\ 
1 & 0 & \ldots & 0 \\ 
0 & \ddots & \ddots & \vdots \\ 
0 & 0 & 1 & 0%
\end{array}%
\right) _{q\times q}
\end{equation*}%
this means that $A_{v}(\underline{x})$, $B_{v}(\underline{x})$ and $%
\underline{B}_{v}\left( \underline{x}\right) $ have entries\ and
coordinates, respectively, which are polynomial functions of the coordinates
of $\underline{x}$. Using the notations above, Equation $\left( \ref{Eq2_1}%
\right) $ can now be written as 
\begin{equation}
\underline{y}_{st+v}=\phi _{v}(\underline{\eta }_{t})\underline{y}_{st+v-1}+%
\underline{B}_{v}(\underline{\eta }_{t})  \tag{2.2}  \label{Eq2_2}
\end{equation}%
with $y_{st+v}=H^{\prime }\underline{y}_{st+v}$ where $H^{\prime }=\left(
1,0,...,0\right) _{1\times d}$. Equation $\left( \ref{Eq2_2}\right) $, is
the same as the defining equation for multivariate generalized periodic
autoregressive process introduced recently by Franses and Paap $(2000)$.
However, since Gladychev $(1961)$, with periodic time-varying coefficients,
it is possible to embed seasons\ into a multivariate stationary process.
More precisely $\left( \underline{Y}_{t}\right) _{t\in \mathbb{Z}}$\ where $%
\underline{Y}_{t}=\left( \underline{y}_{st+1}^{\prime },...,\underline{y}%
_{st+s}^{\prime }\right) ^{\prime }$ is a generalized autoregressive
process, i.e.,%
\begin{equation}
\underline{Y}_{t}=A(\underline{\eta }_{t})\underline{Y}_{t-1}+\underline{B}(%
\underline{\eta }_{t})  \tag{2.3}  \label{Eq2_3}
\end{equation}%
where $A(\underline{\eta }_{t})$ and $\underline{B}(\underline{\eta }_{t})$
are defined by blocks as

\begin{equation*}
A(\underline{\eta }_{t})=\left( 
\begin{array}{cccc}
O_{(d)} & \ldots & O_{(d)} & \phi _{1}(\underline{\eta }_{t}) \\ 
O_{(d)} & \ldots & O_{(d)} & \phi _{2}(\underline{\eta }_{t})\phi _{1}(%
\underline{\eta }_{t}) \\ 
\vdots & \vdots & \vdots & \vdots \\ 
O_{(d)} & \ldots & O_{(d)} & \prod\limits_{v=0}^{s-1}\phi _{s-v}(\underline{%
\eta }_{t})%
\end{array}%
\right) _{ds\times ds},\underline{B}(\underline{\eta }_{t})=\left( 
\begin{array}{c}
\underline{B}_{1}(\underline{\eta }_{t}) \\ 
\phi _{2}(\underline{\eta }_{t})\underline{B}_{1}(\underline{\eta }_{t})+%
\underline{B}_{2}(\underline{\eta }_{t}) \\ 
\vdots \\ 
\sum\limits_{k=1}^{s}\left\{ \prod\limits_{v=0}^{s-k-1}\phi _{s-v}(%
\underline{\eta }_{t})\right\} \underline{B}_{k}(\underline{\eta }_{t})%
\end{array}%
\right) _{ds\times 1}
\end{equation*}%
where, as usual, empty products are set equal to $I_{(d)}$.

In this section, we are interested in strictly stationary and causal
solutions for $\left( \ref{Eq1_1}\right) $. The results of this section are
based on theorems proved by Bougerol and Picard $(1992a)$ for generalized
autoregressive representation. Since $\left( \underline{\eta }_{t}\right)
_{t\in \mathbb{Z}}$ is an $i.i.d.$ process, $\left( A(\underline{\eta }_{t}),%
\underline{B}(\underline{\eta }_{t})\right) _{t\in \mathbb{Z}}$ is a
strictly stationary and ergodic process and since $E\left\{ \log
^{+}\left\Vert A(\underline{\eta }_{0})\right\Vert \right\} \leq $ $E\left\{
\left\Vert A(\underline{\eta }_{0})\right\Vert \right\} $ and $E\left\{ \log
^{+}\left\Vert \underline{B}(\underline{\eta }_{0})\right\Vert \right\} \leq
E\left\{ \left\Vert \underline{B}(\underline{\eta }_{0})\right\Vert \right\} 
$, then both $E\left\{ \log ^{+}\left\Vert A(\underline{\eta }%
_{0})\right\Vert \right\} $ and $E\left\{ \log ^{+}\left\Vert \underline{B}(%
\underline{\eta }_{0})\right\Vert \right\} $ are finite where for any $x>0$, 
$\log ^{+}x=\max \left( \log x,0\right) $.

\begin{theorem}
$\left[ Strict\text{ }stationary\text{ }solution\right] $ Equation $\left( %
\ref{Eq2_3}\right) $ has a unique strictly stationary and ergodic solution
if and only if the top Lyapunov exponent $\gamma _{L}\left( A\right) $
associated with the sequence matrices $\left( A(\underline{\eta }%
_{t})\right) _{t\in \mathbb{Z}}$%
\begin{equation}
\gamma _{L}\left( A\right) :=\inf_{t>0}\left\{ E\frac{1}{t}\log \left\Vert
\prod\limits_{j=0}^{t-1}A(\underline{\eta }_{t-j})\right\Vert \right\} 
\overset{a.s.}{=}\lim_{t\rightarrow \infty }\left\{ \frac{1}{t}\log
\left\Vert \prod\limits_{j=0}^{t-1}A(\underline{\eta }_{t-j})\right\Vert
\right\}  \tag{2.4}  \label{Lya}
\end{equation}%
is strictly negative. The unique stationary solution is causal, ergodic and
given by%
\begin{equation}
\underline{Y}_{t}=\sum\limits_{k=1}^{\infty }\left\{
\prod\limits_{j=0}^{k-1}A(\underline{\eta }_{t-j})\right\} \underline{B}(%
\underline{\eta }_{t-k})+\underline{B}(\underline{\eta }_{t})  \tag{2.5}
\label{Eq2_4}
\end{equation}%
where the above series converges almost surely $\left( a.s\right) $.
\end{theorem}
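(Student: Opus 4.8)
The plan is to recognise $\left( \ref{Eq2_3}\right) $ as a stochastic recurrence equation of the type treated by Bougerol and Picard $(1992a)$ and to invoke their theorem on generalized autoregressive processes. Two things must be checked first. Since $\left( \underline{\eta }_{t}\right) _{t\in \mathbb{Z}}$ is i.i.d.\ and $\left( A(\underline{\eta }_{t}),\underline{B}(\underline{\eta }_{t})\right) $ is a fixed measurable image of it, the driving sequence $\left( A(\underline{\eta }_{t}),\underline{B}(\underline{\eta }_{t})\right) _{t\in \mathbb{Z}}$ is strictly stationary and ergodic; and the two logarithmic moment conditions $E\left\{ \log ^{+}\left\Vert A(\underline{\eta }_{0})\right\Vert \right\} <\infty $ and $E\left\{ \log ^{+}\left\Vert \underline{B}(\underline{\eta }_{0})\right\Vert \right\} <\infty $ hold, as already observed just before the statement. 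The a.s.\ identity in $\left( \ref{Lya}\right) $ is then Kingman's subadditive ergodic theorem applied to the subadditive sequence $t\mapsto E\log \left\Vert \prod_{j=0}^{t-1}A(\underline{\eta }_{t-j})\right\Vert $, so the top Lyapunov exponent $\gamma _{L}(A)$ is well defined.

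For the sufficiency, assume $\gamma _{L}(A)<0$ and fix $\varepsilon >0$ small with $\gamma _{L}(A)+\varepsilon <0$. By $\left( \ref{Lya}\right) $ there is, almost surely, $k_{0}$ with $\left\Vert \prod_{j=0}^{k-1}A(\underline{\eta }_{t-j})\right\Vert \leq \exp \{k(\gamma _{L}(A)+\varepsilon )\}$ for $k\geq k_{0}$, while $E\left\{ \log ^{+}\left\Vert \underline{B}(\underline{\eta }_{0})\right\Vert \right\} <\infty $ gives, by a Borel--Cantelli argument, $\frac{1}{k}\log ^{+}\left\Vert \underline{B}(\underline{\eta }_{t-k})\right\Vert \rightarrow 0$ a.s. Hence the general term of $\left( \ref{Eq2_4}\right) $ decays geometrically fast a.s., so the series converges absolutely a.s.; call its sum $\underline{Y}_{t}$. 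Substituting the series into $\left( \ref{Eq2_3}\right) $ and reindexing shows that $\left( \underline{Y}_{t}\right) $ solves the recurrence; it is strictly stationary because $\underline{Y}_{t}$ is the same measurable function of $\left( \underline{\eta }_{t-k}\right) _{k\geq 0}$ for every $t$, it is ergodic as a measurable image of an ergodic sequence, and it is causal since the series involves only the present and past innovations $\underline{\eta }_{t},\underline{\eta }_{t-1},\ldots $

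Uniqueness follows by a contraction argument: if $\left( \underline{Y}_{t}\right) $ and $\left( \underline{Y}_{t}^{\ast }\right) $ are two strictly stationary solutions, iterating $\left( \ref{Eq2_3}\right) $ $k$ times gives $\underline{Y}_{t}-\underline{Y}_{t}^{\ast }=\left( \prod_{j=0}^{k-1}A(\underline{\eta }_{t-j})\right) \left( \underline{Y}_{t-k}-\underline{Y}_{t-k}^{\ast }\right) $, whose norm is at most $\exp \{k(\gamma _{L}(A)+\varepsilon )\}\left\Vert \underline{Y}_{t-k}-\underline{Y}_{t-k}^{\ast }\right\Vert $; stationarity of $\left( \underline{Y}_{t-k}-\underline{Y}_{t-k}^{\ast }\right) $ forces $\frac{1}{k}\log ^{+}\left\Vert \underline{Y}_{t-k}-\underline{Y}_{t-k}^{\ast }\right\Vert \rightarrow 0$ a.s., so the right-hand side tends to $0$ and $\underline{Y}_{t}=\underline{Y}_{t}^{\ast }$ a.s.

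The main obstacle is the necessity. Suppose a strictly stationary solution $\left( \underline{Y}_{t}\right) $ exists. Since all coordinates of $\underline{Y}_{t}$, all entries of $A(\underline{\eta }_{t})$ and all coordinates of $\underline{B}(\underline{\eta }_{t})$ are nonnegative, iterating $\left( \ref{Eq2_3}\right) $ yields the componentwise lower bound $\underline{Y}_{t}\geq \left( \prod_{j=0}^{k-1}A(\underline{\eta }_{t-j})\right) \underline{B}(\underline{\eta }_{t-k})$ for every $k\geq 1$; as the left-hand side is a.s.\ finite, the right-hand side must tend to $0$ a.s. Because $\alpha _{0}(v)>0$ for every $v$, $\underline{B}(\underline{\eta }_{t-k})$ dominates a fixed strictly positive deterministic vector on the relevant coordinates, and the special block structure of $A(\underline{\eta }_{t})$ --- only its last block-column is nonzero, so these matrix products are governed by the ``collapsed'' products $\prod_{v=0}^{s-1}\phi _{s-v}(\underline{\eta }_{t-j})$ together with the companion form of the $\phi _{v}$ --- lets one upgrade the convergence $\left( \prod_{j=0}^{k-1}A(\underline{\eta }_{t-j})\right) \underline{c}\rightarrow 0$ along one direction to $\left\Vert \prod_{j=0}^{k-1}A(\underline{\eta }_{t-j})\right\Vert \rightarrow 0$ a.s. By $\left( \ref{Lya}\right) $ this forces $\gamma _{L}(A)\leq 0$, and the boundary case $\gamma _{L}(A)=0$ is excluded exactly as in Bougerol and Picard $(1992a)$, which completes the equivalence.
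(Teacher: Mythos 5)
Your proposal is correct in substance, but it is worth noting how it relates to the paper's proof: the paper disposes of the theorem in one line by invoking Theorem 1.3 of Bougerol and Picard (1992a), after having checked (just before the statement) that $\left( A(\underline{\eta }_{t}),\underline{B}(\underline{\eta }_{t})\right)$ is i.i.d.\ (hence stationary ergodic) with finite $\log ^{+}$ moments; what you do is reconstruct the proof of that cited theorem. Your reconstruction is sound: Kingman's theorem for $\left( \ref{Lya}\right) $, the geometric decay of $\left\Vert \prod_{j=0}^{k-1}A(\underline{\eta }_{t-j})\right\Vert $ plus the Borel--Cantelli control of $\log ^{+}\left\Vert \underline{B}(\underline{\eta }_{t-k})\right\Vert $ for the a.s.\ convergence of the series, and the nonnegativity of the coefficients together with $\alpha _{0}(v)>0$ for the converse. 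Two remarks. First, in the uniqueness step your claim that strict stationarity of $\underline{Y}_{t-k}-\underline{Y}_{t-k}^{\ast }$ forces $\frac{1}{k}\log ^{+}\left\Vert \underline{Y}_{t-k}-\underline{Y}_{t-k}^{\ast }\right\Vert \rightarrow 0$ a.s.\ is not justified without a log-moment assumption on the solution (for an i.i.d.\ sequence without finite $\log ^{+}$ moment the normalized sequence has infinite limsup a.s.); the standard and sufficient repair is that stationarity gives tightness, hence the right-hand side tends to $0$ \emph{in probability}, and since the left-hand side does not depend on $k$ it must vanish a.s. Second, the two genuinely delicate points of the necessity --- upgrading $\left\{ \prod_{j=0}^{k-1}A(\underline{\eta }_{t-j})\right\} \underline{B}(\underline{\eta }_{t-k})\rightarrow 0$ to convergence of the full matrix product in every direction, and excluding the boundary case $\gamma _{L}(A)=0$ for i.i.d.\ products --- are exactly the content of Bougerol and Picard's argument, which you ultimately cite; so your proof, like the paper's, is not self-contained at that point, but the appeal is legitimate and the overall logic of the equivalence is correct.
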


\begin{proof}
The theorem is a multidimensional\ extension of Theorem $1.3$ by Bougerol
and Picard $(1992a)$.
\end{proof}

\begin{remark}
\label{Remark_1}Since $\gamma _{L}\left( A\right) $ is independent of the
norm, thus we can work with some norms that make it rather straightforward
to show $\gamma _{L}\left( A\right) <0$. However, sufficient conditions
which ensure $\gamma _{L}\left( A\right) <0$ are

\begin{description}
\item[1.] $E\left\{ \log \left\Vert A(\underline{\eta }_{0})\right\Vert
\right\} <0$.

\item[2.] $E\left\{ \left\Vert \prod\limits_{j=0}^{t-1}A(\underline{\eta }%
_{t-j})\right\Vert ^{r}\right\} <1$ for some $r>0$ and $t\geq 1$, in which
case $E\left\{ \left\Vert \underline{Y}_{t}\right\Vert ^{r}\right\} <+\infty 
$.
\end{description}
\end{remark}

\noindent Now, a simple computation shows that%
\begin{equation*}
\prod\limits_{j=0}^{t}A(\underline{\eta }_{t-j})=A(\underline{\eta }%
_{t})\left( 
\begin{array}{cccc}
O_{(d)} & \ldots & O_{(d)} & O_{(d)} \\ 
O_{(d)} & \ldots & O_{(d)} & O_{(d)} \\ 
\vdots & \vdots & \vdots & \vdots \\ 
O_{(d)} & \ldots & O_{(d)} & \dprod\limits_{i=1}^{t-1}\left\{
\prod\limits_{v=0}^{s-1}\phi _{s-v}(\underline{\eta }_{t-i})\right\}%
\end{array}%
\right) .
\end{equation*}%
Therefore, because the top-Lyapunov\ exponent is independent of the norm, by
choosing a multiplicative norm it is straightforward to show that%
\begin{equation*}
\gamma _{L}\left( A\right) \leq \gamma _{L}(\Phi ):=\inf\limits_{t>0}\left\{
E\frac{1}{t}\log \left\Vert \dprod\limits_{i=1}^{t}\Phi \left( \underline{%
\eta }_{t-i}\right) \right\Vert \right\} .
\end{equation*}%
where $\Phi \left( \underline{\eta }_{t}\right) :=\left\{
\prod\limits_{v=0}^{s-1}\phi _{s-v}(\underline{\eta }_{t})\right\} $. We
have thus shown our first result which gives a sufficient condition for
strict stationarity.

\begin{theorem}
\label{Theorem2} Suppose that $\gamma _{L}(\Phi )<0$. Then for all $t\in 
\mathbb{Z}$ the series%
\begin{equation*}
\sum\limits_{k=0}^{\infty }\left\{ \prod\limits_{j=0}^{k-1}A(\underline{\eta 
}_{t-j})\right\} \underline{B}(\underline{\eta }_{t-k})
\end{equation*}%
converges $a.s.$and the process $\left( \underline{Y}_{t}\right) _{t\in 
\mathbb{Z}}$\ defined by $\left( \ref{Eq2_4}\right) $ is the unique strict
stationary, causal and ergodic solution of $\left( \ref{Eq2_3}\right) $.
\end{theorem}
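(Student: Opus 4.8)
The plan is to derive Theorem~\ref{Theorem2} as a corollary of Theorem~1 (the strict stationarity criterion for the generalized autoregressive representation $\left( \ref{Eq2_3}\right) $) by showing that the hypothesis $\gamma _{L}(\Phi )<0$ forces the top Lyapunov exponent $\gamma _{L}\left( A\right) $ to be strictly negative. Indeed, Theorem~1 already tells us that $\left( \ref{Eq2_3}\right) $ has a unique strictly stationary, causal and ergodic solution, given by the a.s.-convergent series $\left( \ref{Eq2_4}\right) $, precisely when $\gamma _{L}\left( A\right) <0$; so the only thing left to prove is the implication $\gamma _{L}(\Phi )<0\Longrightarrow \gamma _{L}\left( A\right) <0$, together with the (automatic) integrability conditions $E\{\log ^{+}\Vert A(\underline{\eta }_{0})\Vert \}<\infty $ and $E\{\log ^{+}\Vert \underline{B}(\underline{\eta }_{0})\Vert \}<\infty $, which were already verified in the paragraph preceding Theorem~1.

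First I would record the block computation displayed just before the statement: for every $t\geq 1$,
\begin{equation*}
\prod\limits_{j=0}^{t}A(\underline{\eta }_{t-j})=A(\underline{\eta }_{t})\cdot \mathrm{diag}\text{-}\mathrm{type~block~matrix~with~sole~nonzero~block~}\prod\limits_{i=1}^{t-1}\Phi (\underline{\eta }_{t-i}),
\end{equation*}
where $\Phi (\underline{\eta }_{t})=\prod_{v=0}^{s-1}\phi _{s-v}(\underline{\eta }_{t})$. Choosing a sub-multiplicative (matrix) norm — permissible because $\gamma _{L}$ does not depend on the norm — we get $\Vert \prod_{j=0}^{t}A(\underline{\eta }_{t-j})\Vert \leq \Vert A(\underline{\eta }_{t})\Vert \cdot \Vert \prod_{i=1}^{t-1}\Phi (\underline{\eta }_{t-i})\Vert $. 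Taking $\log ^{+}$, dividing by $t$, taking expectations, and letting $t\to \infty $, the term $\frac{1}{t}E\log ^{+}\Vert A(\underline{\eta }_{t})\Vert $ vanishes (it is a fixed finite number divided by $t$), so $\gamma _{L}\left( A\right) \leq \gamma _{L}(\Phi )$; hence $\gamma _{L}(\Phi )<0$ gives $\gamma _{L}\left( A\right) <0$. I would then invoke Theorem~1 to conclude that the series in the statement converges a.s.\ and that $\left( \underline{Y}_{t}\right) $ defined by $\left( \ref{Eq2_4}\right) $ is the unique strictly stationary, causal, ergodic solution of $\left( \ref{Eq2_3}\right) $. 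The a.s.\ convergence of the particular series $\sum_{k\geq 0}\{\prod_{j=0}^{k-1}A(\underline{\eta }_{t-j})\}\underline{B}(\underline{\eta }_{t-k})$ is part of the content of Theorem~1's proof (Bougerol--Picard), since under $\gamma _{L}\left( A\right) <0$ the general term decays geometrically a.s.

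The main obstacle — really the only delicate point — is the passage from the pathwise norm bound to the inequality $\gamma _{L}\left( A\right) \leq \gamma _{L}(\Phi )$: one must be careful that the extra factor $A(\underline{\eta }_{t})$ is asymptotically negligible in the Lyapunov limit, and that the block structure genuinely isolates $\prod_{i=1}^{t-1}\Phi (\underline{\eta }_{t-i})$ as claimed (this is where the nilpotent-looking shape of $A(\underline{\eta }_{t})$, with its single nonzero block column, is used). One should also note that $\gamma _{L}(\Phi )$ is well-defined and finite-or-$-\infty $, since $E\log ^{+}\Vert \Phi (\underline{\eta }_{0})\Vert <\infty $ by the same crude bound $\log ^{+}x\leq x$ together with the fact that $\Phi $ is a polynomial in the integrable (indeed, under mild moment assumptions on $\varepsilon _{t}$, arbitrarily integrable) entries of $\underline{\eta }_{0}$; the subadditive ergodic theorem then supplies both the $\inf $ and the a.s.\ limit representations. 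Apart from this, the argument is essentially a reduction, and I would keep the write-up short, citing Theorem~1 and the preceding block identity rather than re-deriving anything.
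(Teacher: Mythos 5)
Your proposal is correct and follows essentially the same route as the paper: the block identity for $\prod_{j=0}^{t}A(\underline{\eta }_{t-j})$ together with a sub-multiplicative norm (allowed since $\gamma _{L}$ is norm-independent) gives $\gamma _{L}\left( A\right) \leq \gamma _{L}(\Phi )$, and the conclusion is then read off from Theorem~1. The integrability remarks you add ($E\{\log ^{+}\Vert \Phi (\underline{\eta }_{0})\Vert \}<\infty $, negligibility of the single factor $A(\underline{\eta }_{t})$) are exactly the details the paper leaves implicit, so nothing is missing.
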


\begin{example}
\label{Example1} In $PGARCH(1,1)$, writing $\phi _{v}\left( \underline{\eta }%
_{t}\right) =\left( \eta _{st+v},1\right) ^{\prime }\left( \alpha
_{1}(v),\beta _{1}(v)\right) $, we obtain%
\begin{equation*}
\dprod\limits_{v=0}^{s-1}\phi _{s-v}\left( \underline{\eta }_{t}\right)
=\left\{ \dprod\limits_{v=1}^{s-1}\left( \eta _{st+v}\alpha _{1}(v+1)+\beta
(v+1)\right) \right\} \left( \eta _{st+s},1\right) ^{\prime }\left( \alpha
_{1}(1),\beta _{1}(1)\right) .
\end{equation*}%
Hence%
\begin{equation*}
\log \left\Vert \dprod\limits_{i=1}^{t}\Phi \left( \underline{\eta }%
_{t-i}\right) \right\Vert
=\dsum\limits_{v=1}^{s-1}\dsum\limits_{i=1}^{t}\log \left( \eta
_{s(t-i)+v}\alpha _{1}(v+1)+\beta _{1}(v+1)\right) +\log \left\Vert
\dprod\limits_{i=1}^{t}\left( \eta _{s(t-i)+s},1\right) ^{\prime }\left(
\alpha _{1}(1),\beta _{1}(1)\right) \right\Vert \text{.}
\end{equation*}%
By the\ law of large numbers, a sufficient condition which ensure $\gamma
_{L}(A)<0$ is 
\begin{equation*}
\dsum\limits_{v=1}^{s}E\left\{ \log \left( \eta _{st+v}\alpha _{1}(v)+\beta
_{1}(v)\right) \right\} <0
\end{equation*}%
which reduces to the classical condition when $s=1$. It is worth noting that
the existence of explosive regimes (i.e., $E\left\{ \log \left( \eta
_{st+v}\alpha _{1}(v)+\beta _{1}(v)\right) \right\} >0$)\ does not preclude
(periodic) strict stationarity.
\end{example}

The top-Lyapunov exponent $\gamma _{L}(.)$ criterion seems difficult to
obtain explicitly, however a potential method to verify whether or not $%
\gamma _{L}(.)<0$ is via a Monte-Carlo simulation using Equation $\left( \ref%
{Eq2_3}\right) $. This fact heavily limits the interests of the criterion in
statistical applications. Indeed, the solution need to have some moments to
make an estimation theory possible and Condition $\left( \ref{Lya}\right) $
does not guarantee the existence of such moments. Therefore, we have to
search for conditions ensuring the existence of moments for the stationary
solution, for which, the top-Lyapunov exponent $\gamma _{L}(.)$ will be
automatically negative (see Remark $\left( \ref{Remark_2}\right) $).

\section{Necessary and sufficient second-order stationarity conditions}

In this section we examine the necessary and sufficient conditions ensuring
the existence of a unique causal periodically correlated $(PC)$ solution to $%
\left( \ref{Eq1_1}\right) $. This is equivalent to examining conditions of
the existence of solutions in $\mathbb{L}^{1}$ of the process $\left( 
\underline{Y}_{t}\right) _{t\in \mathbb{Z}}$ defined in equation $\left( \ref%
{Eq2_3}\right) $. Let $A=E\left\{ A(\underline{\eta }_{t})\right\} $ and $%
\underline{B}=E\left\{ \underline{B}(\underline{\eta }_{t})\right\} $ be the
expectations of the random matrix $A(\underline{\eta }_{t})$ and the random
vector \underline{$\underline{B}$}$(\underline{\eta }_{t})$, then $A$ and $%
\underline{B}$ are defined element-wise as 
\begin{equation*}
A=\left( 
\begin{array}{cccc}
O_{(d)} & \ldots & O_{(d)} & \phi _{1} \\ 
O_{(d)} & \ldots & O_{(d)} & \phi _{2}\phi _{1} \\ 
\vdots & \vdots & \vdots & \vdots \\ 
O_{(d)} & \ldots & O_{(d)} & \prod\limits_{v=0}^{s-1}\phi _{s-v}%
\end{array}%
\right) ,\underline{B}=\left( 
\begin{array}{c}
\underline{B}_{1} \\ 
\phi _{2}\underline{B}_{1}+\underline{B}_{2} \\ 
\vdots \\ 
\sum\limits_{k=1}^{s}\left\{ \prod\limits_{v=0}^{s-k-1}\phi _{s-v}\right\} 
\underline{B}_{k}%
\end{array}%
\right)
\end{equation*}%
where 
\begin{equation*}
\phi _{v}:=\left( 
\begin{array}{cc}
A_{v}(\mathbf{1}) & B_{v}(\mathbf{1}) \\ 
A_{v} & B_{v}%
\end{array}%
\right) _{d\times d},\underline{B}_{v}:=\left( 
\begin{array}{c}
\alpha _{0}(v) \\ 
O_{(p-1)\times 1} \\ 
\alpha _{0}(v) \\ 
O_{(q-1)\times 1}%
\end{array}%
\right) _{d\times 1}
\end{equation*}%
with $\mathbf{1}=(1,1,...,1)^{\prime }\in \mathbb{R}^{s}$. To verify that
the process $\left( \underline{Y}_{t}\right) _{t\in \mathbb{Z}}$ defined by $%
\left( \ref{Eq2_4}\right) $ is well-defined in $\mathbb{L}^{1}$, it is
sufficient to show that the coefficients $\underline{Y}_{t,k}=\left\{
\prod\limits_{j=0}^{k-1}A(\underline{\eta }_{t-j})\right\} \underline{B}(%
\underline{\eta }_{t-k})$ converge to zero in $\mathbb{L}^{1}$ (endowed with
any matrix norm) at an exponential rate, as $k\rightarrow \infty $.

\begin{lemma}
\label{Lemma1} Assume that%
\begin{equation}
\rho \left( \prod\limits_{v=0}^{s-1}\phi _{s-v}\right) <1  \tag{3.1}
\label{Eq3_1}
\end{equation}%
then the series $\sum\limits_{k=0}^{\infty }\left\{
\prod\limits_{j=0}^{k-1}A(\underline{\eta }_{t-j})\right\} \underline{B}(%
\underline{\eta }_{t-k})$ converges $a.s$. Furthermore, the process $\left( 
\underline{Y}_{t}\right) _{t\in \mathbb{Z}}$ defined by $\left( \ref{Eq2_4}%
\right) $ is stationary in $\mathbb{L}^{1}$ and satisfying $(\ref{Eq2_3})$.
\end{lemma}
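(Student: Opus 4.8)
The plan is to show that condition $(\ref{Eq3_1})$, which controls the spectral radius of the deterministic matrix $\prod_{v=0}^{s-1}\phi_{s-v} = E\{\Phi(\underline\eta_t)\}$, forces the $\mathbb{L}^1$-norm of the partial sums $\underline Y_{t,k} = \{\prod_{j=0}^{k-1}A(\underline\eta_{t-j})\}\underline B(\underline\eta_{t-k})$ to decay geometrically, which yields both the a.s.\ convergence of the series and its convergence in $\mathbb{L}^1$. The first step is to pass from the $ds\times ds$ matrix $A(\underline\eta_t)$ to the $d\times d$ block $\Phi(\underline\eta_t)$: by the block structure of $A(\underline\eta_t)$ exhibited just before Theorem~\ref{Theorem2}, a product $\prod_{j=0}^{k-1}A(\underline\eta_{t-j})$ has all its blocks expressible through products of the $\phi_v(\underline\eta_{t-j})$, and in fact (as already noted) $\prod_{i=1}^{t}A(\underline\eta_{t-i})$ is governed by $\prod_{i=1}^{t-1}\Phi(\underline\eta_{t-i})$. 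So it suffices to bound $E\|\prod_{i=1}^{m}\Phi(\underline\eta_{t-i})\|$.

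The key step is the following: since the $\underline\eta_{t-i}$ are i.i.d.\ and since each entry of $\Phi(\underline\eta_{t-i})$ is a nonnegative linear function of the coordinates of $\underline\eta_{t-i}$ (the matrices $A_v,B_v$ and the coefficient functions all have nonnegative entries, being built from $\alpha_i(v)\ge 0$, $\beta_j(v)\ge 0$), one has entrywise
\begin{equation*}
E\left\{\prod_{i=1}^{m}\Phi(\underline\eta_{t-i})\right\} = \prod_{i=1}^{m} E\{\Phi(\underline\eta_{t-i})\} = \left(\prod_{v=0}^{s-1}\phi_{s-v}\right)^{m}.
\end{equation*}
Here independence lets the expectation factor through the matrix product, and nonnegativity of all entries means no cancellation occurs, so the expectation of the product of matrices equals the product of the expectation matrices. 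Because all entries of $\prod_{i=1}^{m}\Phi(\underline\eta_{t-i})$ are nonnegative, its norm is comparable (up to a dimensional constant) to the sum of its entries, hence to the sum of entries of $\left(\prod_{v=0}^{s-1}\phi_{s-v}\right)^{m}$, i.e.\ $E\|\prod_{i=1}^{m}\Phi(\underline\eta_{t-i})\| \le C\,\|\left(\prod_{v=0}^{s-1}\phi_{s-v}\right)^{m}\|$ for a fixed constant $C$. Now Gelfand's formula together with $(\ref{Eq3_1})$ gives, for any $\rho\left(\prod_{v=0}^{s-1}\phi_{s-v}\right) < \lambda < 1$, a constant $K$ with $\|\left(\prod_{v=0}^{s-1}\phi_{s-v}\right)^{m}\| \le K\lambda^{m}$ for all $m$. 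Combining, $E\|\underline Y_{t,k}\| \le C' \lambda^{\,k/s - 1}\,E\|\underline B(\underline\eta_0)\|$ (the $1/s$ coming from grouping the $k$ factors of $A$ into blocks of $s$), and since $E\|\underline B(\underline\eta_0)\| < \infty$, the series $\sum_k E\|\underline Y_{t,k}\| < \infty$. This simultaneously gives $\sum_k \underline Y_{t,k}$ convergent in $\mathbb{L}^1$ (hence $\underline Y_t \in \mathbb{L}^1$) and, via Beppo Levi / Borel--Cantelli, convergent a.s.

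The final step is to verify that the a.s.\ limit $\underline Y_t$ defined by $(\ref{Eq2_4})$ actually solves $(\ref{Eq2_3})$ and is stationary: stationarity is immediate because $(\underline\eta_t)_{t\in\mathbb{Z}}$ is i.i.d., so the distribution of the whole series is shift-invariant; and plugging the series into $(\ref{Eq2_3})$ and using the a.s.\ convergence to justify rearrangement shows $\underline Y_t = A(\underline\eta_t)\underline Y_{t-1} + \underline B(\underline\eta_t)$. The main obstacle I anticipate is the bookkeeping in the first step --- carefully relating the $ds\times ds$ products $\prod_j A(\underline\eta_{t-j})$ to powers of the $d\times d$ matrix $\prod_{v=0}^{s-1}\phi_{s-v}$ when $k$ is not a multiple of $s$, and tracking the leftover $\phi_v$ factors and the $\underline B_k$ terms so that the geometric bound survives with only a harmless multiplicative constant. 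The nonnegativity-plus-independence factorization $E\{\prod\Phi\} = \prod E\{\Phi\}$ is the conceptual heart and is the reason the \emph{deterministic} spectral radius condition $(\ref{Eq3_1})$ is enough; once that is in place the rest is standard geometric-series estimation.
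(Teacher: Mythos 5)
Your proposal is correct and takes essentially the same route as the paper: independence plus nonnegativity reduce the random series to the deterministic series $\sum_{k}A^{k}\underline{B}$, whose geometric decay under (3.1) yields summability of the $\mathbb{L}^{1}$-norms, hence a.s.\ and $\mathbb{L}^{1}$ convergence, after which stationarity and the fixed-point equation (2.3) are verified exactly as you describe; the paper merely identifies $\rho(A)=\rho\bigl(\prod_{v=0}^{s-1}\phi_{s-v}\bigr)$ through the characteristic polynomial of $A$ instead of unpacking block products of $\Phi$. Two harmless slips in your write-up: the entries of $\Phi(\underline{\eta}_{t})$ are multilinear, not linear, in the coordinates (the factorization of expectations needs only independence, while nonnegativity is what you correctly use for the norm-versus-expectation comparison), and each $A(\underline{\eta}_{t})$ already contains a full factor $\Phi(\underline{\eta}_{t})$, so the true decay is of order $\lambda^{k-1}$ rather than $\lambda^{k/s-1}$, which only strengthens your bound.
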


\begin{proof}
First, we notice that $A(\underline{\eta }_{t})$ and $\underline{B}(%
\underline{\eta }_{t})$ are sequences of independent, non-negative random
matrices and vectors respectively, and $A(\underline{\eta }_{t-i})$ is
independent of $\underline{B}(\underline{\eta }_{t-j})$ for all $i\neq j$.
Therefore, we have $E\left\{ \prod\limits_{j=0}^{k-1}A(\underline{\eta }%
_{t-j})\right\} \underline{B}(\underline{\eta }_{t-k})=A^{k}\underline{B}.$
It can be shown that the characteristic polynomial of $A$ is $\det
(I_{(ds)}-\lambda A)=\det (I_{(s)}-\lambda \prod\limits_{v=0}^{s-1}\phi
_{s-v})$, hence $\rho \left( A\right) =\rho \left(
\prod\limits_{v=0}^{s-1}\phi _{s-v}\right) $. If $\rho \left(
\prod\limits_{v=0}^{s-1}\phi _{s-v}\right) <1$, then $\sum\limits_{k=1}^{%
\infty }A^{k}<+\infty ,$ and $\sum\limits_{k=0}^{\infty }E\left\{ \left\{
\prod\limits_{j=0}^{k-1}A(\underline{\eta }_{t-j})\right\} \underline{B}(%
\underline{\eta }_{t-k})\right\} <+\infty $ and thus $\sum\limits_{k=0}^{%
\infty }\left\{ \prod\limits_{j=0}^{k-1}A(\underline{\eta }_{t-j})\right\} 
\underline{B}(\underline{\eta }_{t-k})$ converges $a.s.$. This further
implies that $\left\{ \prod\limits_{j=0}^{k-1}A(\underline{\eta }%
_{t-j})\right\} \underline{B}(\underline{\eta }_{t-k})$ converges $a.s$. to
the zero matrix as $k\rightarrow \infty .$ It is obvious that the process $%
\left( \underline{Y}_{t}\right) _{t\in \mathbb{Z}}$ defined in $\left( \ref%
{Eq2_4}\right) $ is stationary in $\mathbb{L}^{1}$. Furthermore, we obtain 
\begin{eqnarray*}
\underline{Y}_{t} &=&\underline{B}(\underline{\eta }_{t})+A(\underline{\eta }%
_{t})\left\{ \underline{B}(\underline{\eta }_{t-1})+\sum\limits_{k=2}^{%
\infty }\left\{ \prod\limits_{j=0}^{k-1}A(\underline{\eta }_{t-j})\right\} 
\underline{B}(\underline{\eta }_{t-k})\right\} \\
&=&\underline{B}(\underline{\eta }_{t})+A(\underline{\eta }_{t})\left\{ 
\underline{B}(\underline{\eta }_{t-1})+\sum\limits_{k=1}^{\infty }\left\{
\prod\limits_{j=0}^{k-1}A(\underline{\eta }_{t-j-1})\right\} \underline{B}(%
\underline{\eta }_{t-k-1})\right\} \\
&=&\underline{B}(\underline{\eta }_{t})+A(\underline{\eta }_{t})\underline{Y}%
_{t-1}\text{.}
\end{eqnarray*}
\end{proof}

\begin{lemma}
\label{Lemma2} If $(\ref{Eq2_3})$ admits a stationary solution in $\mathbb{L}%
^{1}$, then $\rho \left( \prod\limits_{v=0}^{s-1}\phi _{s-v}\right) <1$.
Moreover, the stationary solution of $(\ref{Eq2_3})$ is unique, causal and
ergodic.
\end{lemma}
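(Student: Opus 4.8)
The plan is to establish the contrapositive direction by a moment/positivity argument and then derive uniqueness and ergodicity from the earlier results. Suppose $(\ref{Eq2_3})$ admits a stationary solution $(\underline{Y}_t)_{t\in\mathbb{Z}}$ in $\mathbb{L}^1$. Taking expectations in $\underline{Y}_t = A(\underline{\eta}_t)\underline{Y}_{t-1} + \underline{B}(\underline{\eta}_t)$ and using that $\underline{\eta}_t$ is independent of $\underline{Y}_{t-1}$ (which holds because, by construction, $\varepsilon_k$ is independent of $x_t$ for $k>t$, so $\underline{Y}_{t-1}$ depends only on $\{\underline{\eta}_s : s\le t-1\}$), we get $E\{\underline{Y}_t\} = A\,E\{\underline{Y}_{t-1}\} + \underline{B}$. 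Stationarity forces $E\{\underline{Y}_t\} = E\{\underline{Y}_{t-1}\} =: \underline{m}$, hence $(I_{(ds)} - A)\underline{m} = \underline{B}$.

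The key step is to upgrade this into a spectral-radius statement. Iterating the recursion $k$ times yields $\underline{m} = A^k \underline{m} + \sum_{j=0}^{k-1} A^j \underline{B}$ for every $k\ge 1$. Since all entries of $A$, of $\underline{B}$, and of $\underline{m}$ are non-negative (the $\underline{Y}_t$ are squares of $x$'s and conditional variances, all $\ge 0$), every term is non-negative, so the partial sums $\sum_{j=0}^{k-1} A^j \underline{B}$ are coordinatewise non-decreasing and bounded above by $\underline{m}$; hence $\sum_{j=0}^{\infty} A^j \underline{B}$ converges. Here I must be slightly careful: convergence of $\sum A^j\underline{B}$ alone does not immediately give $\rho(A)<1$ unless $\underline{B}$ "sees" the relevant eigen-directions. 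This is where I expect the main obstacle, and I would resolve it by exploiting the block structure: the nonzero block of $A$ is concentrated in the last block-column and governed by $\prod_{v=0}^{s-1}\phi_{s-v}$, and by Lemma~\ref{Lemma1}'s identity $\rho(A) = \rho\big(\prod_{v=0}^{s-1}\phi_{s-v}\big)$ together with the fact that each $\phi_v$ has the companion-type structure of a nonnegative irreducible (or at least primitive on its support) matrix with a strictly positive contribution from $\alpha_0(v)>0$ in $\underline{B}_v$. Concretely, one checks that the Perron eigenvector of $\prod_{v=0}^{s-1}\phi_{s-v}$ has a nonzero overlap with $\underline{B}$, so $\sum_j A^j \underline{B} = +\infty$ componentwise if $\rho(A)\ge 1$, contradicting the bound by the finite vector $\underline{m}$. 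Therefore $\rho\big(\prod_{v=0}^{s-1}\phi_{s-v}\big) = \rho(A) < 1$.

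For the remaining assertions, once $\rho\big(\prod_{v=0}^{s-1}\phi_{s-v}\big)<1$ is known, Lemma~\ref{Lemma1} shows that the series $(\ref{Eq2_4})$ converges $a.s.$ and defines a stationary $\mathbb{L}^1$ solution; causality is immediate from the representation $(\ref{Eq2_4})$, which expresses $\underline{Y}_t$ as a measurable function of $\{\underline{\eta}_s : s\le t\}$ only. Ergodicity follows because $(\underline{\eta}_t)_{t\in\mathbb{Z}}$ is i.i.d., hence ergodic, and $\underline{Y}_t$ is a measurable image of a shift of this ergodic sequence, so $(\underline{Y}_t)$ inherits ergodicity. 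Finally, for uniqueness: if $(\underline{Y}_t')$ were another stationary $\mathbb{L}^1$ solution, then $\underline{Y}_t - \underline{Y}_t' = A(\underline{\eta}_t)(\underline{Y}_{t-1} - \underline{Y}_{t-1}')$, and iterating gives $\underline{Y}_t - \underline{Y}_t' = \big\{\prod_{j=0}^{k-1} A(\underline{\eta}_{t-j})\big\}(\underline{Y}_{t-k} - \underline{Y}_{t-k}')$; taking $\mathbb{L}^1$ norms, using independence and $E\big\{\prod_{j=0}^{k-1}A(\underline{\eta}_{t-j})\big\} = A^k \to 0$, shows the right-hand side tends to $0$ in $\mathbb{L}^1$, so $\underline{Y}_t = \underline{Y}_t'$ $a.s.$
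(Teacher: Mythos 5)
You have the right skeleton, and it matches the paper's own route up to the critical point: use non-negativity of $A(\underline{\eta }_{t})$, $\underline{B}(\underline{\eta }_{t})$, $\underline{Y}_{t}$ and the iterated recursion to bound the partial sums $\sum_{j=0}^{k-1}A^{j}\underline{B}$ by the finite mean vector, hence $\sum_{j\geq 0}A^{j}\underline{B}<\infty $; and your uniqueness/causality/ergodicity part is essentially the paper's. The gap is exactly where you flag it yourself: passing from $\sum_{j}A^{j}\underline{B}<\infty $ to $\rho \left( \prod_{v=0}^{s-1}\phi _{s-v}\right) <1$. Your proposed resolution rests on the claim that each $\phi _{v}$ (or the product) is irreducible, or primitive on its support, so that a Perron left eigenvector has positive pairing with $\underline{B}$. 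That justification is false in general: these companion-type matrices are typically reducible under the standing assumptions $\alpha _{i}(v)\geq 0$, $\beta _{j}(v)\geq 0$ (for instance, if $\beta _{q}(v)=0$ the last column of $\phi _{v}$ is zero, and further zero coefficients, possibly varying with the season, create genuinely reducible products), so Perron--Frobenius gives you only \emph{some} non-negative left eigenvector, which a priori may be supported on coordinates where $\underline{B}$ vanishes. The "one checks that the Perron eigenvector has nonzero overlap with $\underline{B}$" is precisely the non-trivial content of the lemma, and you have not checked it.

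The claim can in fact be salvaged, but doing so requires the structural computation that constitutes the heart of the paper's proof: since $\underline{B}_{v}=\alpha _{0}(v)\left( e_{1}+e_{p+1}\right) $ with $\alpha _{0}(v)>0$, the condition $A^{k}\underline{B}\rightarrow 0$ gives $\lim_{k}\left\{ \prod_{i=0}^{s-1}\phi _{s-i}\right\} ^{k}\left\{ \prod_{v=0}^{s-l-1}\phi _{s-v}\right\} \left( e_{1}+e_{p+1}\right) =0$ for $l=1,\dots ,s$, and then the shift relations $\phi _{v}e_{i}=\alpha _{i}(v)\left( e_{1}+e_{p+1}\right) +e_{i+1}$ and $\phi _{v}e_{p+i}=\beta _{i}(v)\left( e_{1}+e_{p+1}\right) +e_{p+i+1}$ are used to propagate this to every canonical basis vector, yielding $\left\{ \prod_{i=0}^{s-1}\phi _{s-i}\right\} ^{k}\rightarrow 0$ and hence $\rho <1$ (the dual argument, showing that any non-negative left eigenvector orthogonal to all the vectors $\left\{ \prod_{v=0}^{s-l-1}\phi _{s-v}\right\} \left( e_{1}+e_{p+1}\right) $ must be zero, needs the same bookkeeping). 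Until you supply this propagation step, or an equivalent one, your argument assumes what is to be proved at its only delicate point; everything else in your proposal is fine.
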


\begin{proof}
From $(\ref{Eq2_3})$, we obtain, by recursion, for any $n\geq 1$%
\begin{equation}
\underline{Y}_{t}=\underline{B}(\underline{\eta }_{t})+\sum\limits_{k=1}^{n}%
\left\{ \prod\limits_{j=0}^{k-1}A(\underline{\eta }_{t-j})\right\} 
\underline{B}(\underline{\eta }_{t-k})+\left\{ \prod\limits_{j=0}^{n}A(%
\underline{\eta }_{t-j})\right\} \underline{Y}_{t-n-1}.  \tag{3.2}
\label{Eq3_2}
\end{equation}%
Since, all $A(\underline{\eta }_{t}),$ $\underline{B}(\underline{\eta }_{t})$
and $\underline{Y}_{t}$ are non-negative, then 
\begin{equation*}
E\left\{ \underline{Y}_{t}\right\} \geq \sum\limits_{k=0}^{n}E\left\{
\left\{ \prod\limits_{j=0}^{k-1}A(\underline{\eta }_{t-j})\right\} 
\underline{B}(\underline{\eta }_{t-k})\right\} =\sum\limits_{k=0}^{n}A^{k}%
\underline{B}.
\end{equation*}%
This implies that $\sum\limits_{k=0}^{\infty }A^{k}\underline{B}<+\infty $.
Hence, $\lim\limits_{k\rightarrow \infty }A^{k}\underline{B}=0$. Since 
\begin{equation*}
A^{k}=\left( 
\begin{array}{cccc}
O_{(d)} & \ldots & O_{(d)} & \phi _{1}\left\{ \dprod\limits_{i=0}^{s-1}\phi
_{s-i}\right\} ^{k-1} \\ 
O_{(d)} & \ldots & O_{(d)} & \phi _{2}\phi _{1}\left\{
\dprod\limits_{i=0}^{s-1}\phi _{s-i}\right\} ^{k-1} \\ 
\vdots & \vdots & \vdots & \vdots \\ 
O_{(d)} & \ldots & O_{(d)} & \left\{ \dprod\limits_{i=0}^{s-1}\phi
_{s-i}\right\} ^{k}%
\end{array}%
\right) ,
\end{equation*}%
then to show that $\lim\limits_{k\rightarrow \infty }A^{k}=0$, it is
sufficient to show that $\lim\limits_{k\rightarrow \infty }\left\{
\dprod\limits_{i=0}^{s-1}\phi _{s-i}\right\} ^{k}=0$ which implies that $%
\rho \left( \dprod\limits_{i=0}^{s-1}\phi _{s-i}\right) =\rho \left(
A\right) <1$. To do this we shall prove that%
\begin{equation}
\lim\limits_{k\rightarrow \infty }\left\{ \dprod\limits_{i=0}^{s-1}\phi
_{s-i}\right\} ^{k}e_{i}=0\text{, }i=1,...,d  \tag{3.3}  \label{Eq3_3}
\end{equation}%
where $\left( e_{v}\right) _{1\leq v\leq d}$ is the canonical basis of $%
\mathbb{R}^{d}$, i.e., $e_{v}=\left( \delta _{1,v},\delta _{2,v},...,\delta
_{d,v}\right) ^{\prime }$ where $\delta _{i,j}=1$ if $i=j$ and $0$
otherwise.\ Because the $d\times d$ $j-th$ block of vector$\ A^{k}\underline{%
B}$ is $\left( A^{k}\underline{B}\right) _{j}=\left\{
\dprod\limits_{v=0}^{j-1}\phi _{j-v}\right\} \dsum\limits_{l=1}^{s}\left\{
\dprod\limits_{i=0}^{s-1}\phi _{s-i}\right\} ^{k-1}\left\{
\dprod\limits_{v=0}^{s-l-1}\phi _{s-v}\right\} \underline{B}_{l}$, $%
j=1,...,s $ we deduce that for $j=s$, 
\begin{equation}
\lim\limits_{k\rightarrow \infty }\left\{ \dprod\limits_{i=0}^{s-1}\phi
_{s-i}\right\} ^{k}\left\{ \dprod\limits_{v=0}^{s-l-1}\phi _{s-v}\right\} 
\underline{B}_{l}=0,\text{ \ \ }l=1,...,s.  \tag{3.4}  \label{Eq3_4}
\end{equation}%
Since $\underline{B}_{v}=\alpha _{0}(v)e_{1}+\alpha _{0}(v)e_{p+1}$ for $%
v=1,...,s$ and $\alpha _{0}(v)>0$, we obtain from $\left( \ref{Eq3_4}\right) 
$ for \ $l=1,...,s$%
\begin{equation*}
\lim_{k\rightarrow \infty }\left\{ \dprod\limits_{i=0}^{s-1}\phi
_{s-i}\right\} ^{k}\left\{ \dprod\limits_{v=0}^{s-l-1}\phi _{s-v}\right\}
e_{1}=0,\lim_{k\rightarrow \infty }\left\{ \dprod\limits_{i=0}^{s-1}\phi
_{s-i}\right\} ^{k}\left\{ \dprod\limits_{v=0}^{s-l-1}\phi _{s-v}\right\}
e_{p+1}=0
\end{equation*}%
and we use the relationships $\phi _{v}e_{i}=\alpha _{i}(v)\left(
e_{1}+e_{p+1}\right) +e_{i+1}$ for $i=1,...,p$ and $\phi _{v}e_{p+i}=\beta
_{i}(v)\left( e_{1}+e_{p+1}\right) +e_{p+i+1}$ for $i=1,...,q-1$ we obtain%
\begin{eqnarray*}
0 &=&\lim_{k\rightarrow \infty }\left\{ \dprod\limits_{i=0}^{s-1}\phi
_{s-i}\right\} ^{k}\left\{ \dprod\limits_{v=0}^{s-l-1}\phi _{s-v}\right\}
e_{1}=\lim_{k\rightarrow \infty }\left\{ \dprod\limits_{i=0}^{s-1}\phi
_{s-i}\right\} ^{k}e_{s+1-l}\text{, }l=1,...,s \\
0 &=&\lim_{k\rightarrow \infty }\left\{ \dprod\limits_{i=0}^{s-1}\phi
_{s-i}\right\} ^{k}\left\{ \dprod\limits_{v=0}^{s-l-1}\phi _{s-v}\right\}
e_{p+1}=\lim_{k\rightarrow \infty }\left\{ \dprod\limits_{i=0}^{s-1}\phi
_{s-i}\right\} ^{k}e_{p+s+1-l}\text{, }l=1,...,s
\end{eqnarray*}%
and hence $\left( \ref{Eq3_3}\right) $ holds for $i=1,...s$ and $%
i=p+1,...p+s $. On the other hand, from $\left( \ref{Eq3_4}\right) $ we have%
\begin{eqnarray*}
0 &=&\lim\limits_{k\rightarrow \infty }\left\{ \dprod\limits_{i=0}^{s-1}\phi
_{s-i}\right\} ^{k}e_{i}=\lim\limits_{k\rightarrow \infty }\left\{
\dprod\limits_{i=0}^{s-1}\phi _{s-i}\right\} ^{k-1}e_{i+s}\text{, }i=1,...,s
\\
0 &=&\lim\limits_{k\rightarrow \infty }\left\{ \dprod\limits_{i=0}^{s-1}\phi
_{s-i}\right\} ^{k}e_{p+i}=\lim\limits_{k\rightarrow \infty }\left\{
\dprod\limits_{i=0}^{s-1}\phi _{s-i}\right\} ^{k-1}e_{p+i+1}\text{, }%
i=1,...,s
\end{eqnarray*}%
This concludes the proof of the lemma. To prove the uniqueness, let $\left( 
\underline{Z}_{t}\right) _{t\in \mathbb{Z}}$ be another stationary process
conforming to $(\ref{Eq2_3})$. Then $\left( \underline{Z}_{t}\right) _{t\in 
\mathbb{Z}}$ satisfies an equation similar to $(\ref{Eq3_2})$. By setting $%
\underline{W}_{t}=\underline{Z}_{t}-\underline{Y}_{t}$, we obtain, $\forall
m\geq 1:\underline{W}_{t}=\prod\limits_{j=0}^{m}A(\underline{\eta }_{t-j})%
\underline{W}_{t-m-1}.$ Defining $\underline{W}=E\left\{ \underline{W}%
_{t}\right\} $, we have $\underline{W}=A^{m+1}\underline{W}$, since $\lambda
:=\rho \left( A\right) <1$ we conclude that $\underline{W}=0$ (with
probability $1$). Hence the uniqueness follows.
\end{proof}

\begin{theorem}
\label{Theorem3} A necessary and sufficient condition for the existence of
unique stationary solution in $\mathbb{L}^{1}$ of equation $(\ref{Eq2_3})$
is that $\left( 3.1\right) $ holds. Moreover, this stationary solution is
causal and ergodic.
\end{theorem}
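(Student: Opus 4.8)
The plan is to assemble this theorem directly from Lemma~\ref{Lemma1} and Lemma~\ref{Lemma2}, since between them they already contain both implications. First I would dispatch sufficiency: assuming $(\ref{Eq3_1})$, Lemma~\ref{Lemma1} gives that the series in $(\ref{Eq2_4})$ converges $a.s.$, that the resulting process $\left( \underline{Y}_{t}\right) _{t\in \mathbb{Z}}$ is stationary in $\mathbb{L}^{1}$, and that it solves $(\ref{Eq2_3})$. So no new argument is needed here beyond quoting the lemma.

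Next I would handle necessity: if $(\ref{Eq2_3})$ admits some stationary solution in $\mathbb{L}^{1}$, then Lemma~\ref{Lemma2} shows $\rho \left( \prod\limits_{v=0}^{s-1}\phi _{s-v}\right) <1$, which is precisely $(\ref{Eq3_1})$. The mechanism there --- iterate $(\ref{Eq2_3})$ to obtain $(\ref{Eq3_2})$, use non-negativity of $A(\underline{\eta }_{t})$, $\underline{B}(\underline{\eta }_{t})$ and $\underline{Y}_{t}$ to bound $E\{\underline{Y}_{t}\}$ below by the partial sums $\sum_{k}A^{k}\underline{B}$, force $A^{k}\underline{B}\rightarrow 0$, and then propagate this to $\left\{ \prod_{i=0}^{s-1}\phi _{s-i}\right\}^{k}e_{i}\rightarrow 0$ for each basis vector via the shift relations $\phi _{v}e_{i}=\alpha _{i}(v)(e_{1}+e_{p+1})+e_{i+1}$ and $\phi _{v}e_{p+i}=\beta _{i}(v)(e_{1}+e_{p+1})+e_{p+i+1}$, whence $\rho(A)=\rho\left(\prod_{v=0}^{s-1}\phi_{s-v}\right)<1$ --- is exactly the content of Lemma~\ref{Lemma2}, so I would simply invoke it.

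For the ``moreover'' clause: uniqueness is already established inside Lemma~\ref{Lemma2} (form the difference $\underline{W}_{t}$ of two stationary solutions, obtain $\underline{W}_{t}=\prod_{j=0}^{m}A(\underline{\eta }_{t-j})\underline{W}_{t-m-1}$, take expectations and use $\rho(A)<1$). Causality is immediate from the explicit representation $(\ref{Eq2_4})$, which expresses $\underline{Y}_{t}$ through $\{\underline{\eta }_{t-k}:k\geq 0\}$ only. Ergodicity follows because $\left( \underline{\eta }_{t}\right) _{t\in \mathbb{Z}}$ is i.i.d.\ and hence ergodic, $\left( \underline{Y}_{t}\right) $ is a measurable functional of its past, and measurable images of ergodic processes are ergodic.

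I do not anticipate a real obstacle: all the analytic work sits in the two lemmas and the theorem is their conjunction. The only point meriting a line of care is checking that the ``unique stationary solution'' furnished by Lemma~\ref{Lemma1} and the one whose uniqueness is proved in Lemma~\ref{Lemma2} are the same object --- which they are, since Lemma~\ref{Lemma1}'s solution lies in $\mathbb{L}^{1}$ and Lemma~\ref{Lemma2}'s uniqueness argument is carried out within $\mathbb{L}^{1}$.
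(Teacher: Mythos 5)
Your proposal is correct and follows essentially the same route as the paper: Theorem~\ref{Theorem3} is obtained by combining Lemma~\ref{Lemma1} (sufficiency) with Lemma~\ref{Lemma2} (necessity and uniqueness), causality read off the representation $(\ref{Eq2_4})$, and ergodicity from the standard argument that a measurable functional of the i.i.d.\ (hence ergodic) sequence $\left(\underline{\eta}_{t}\right)$ is ergodic, which is exactly the Bougerol--Picard argument the paper cites.
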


\begin{proof}
The proof follows from Lemma \ref{Lemma1} and \ref{Lemma2}. The fact that
stationary solution is ergodic is obtained by the same argument as in
Bougerol and Picard $(1992a)$.
\end{proof}

\begin{remark}
\label{Remark_2}Since $\gamma _{L}\left( \Phi \right) <\log \rho \left(
\prod\limits_{v=0}^{s-1}\phi _{s-v}\right) $ (see Kesten and Spitzer, $1984$%
), the condition $\left( \ref{Eq3_1}\right) $ is necessary and sufficient
for the existence of a strictly stationary solution in $\mathbb{L}^{1}$ of $%
\left( \ref{Eq2_3}\right) $.
\end{remark}

\section{Existence of the higher-order moments}

In this section, we present a necessary and sufficient conditions for the
existence of finite higher order moments for a $PGARCH$ process.

\begin{theorem}
\label{Theorem4} Let $\left( \underline{Y}_{t}\right) _{t\geq 0}$ be the
stationary solution of model $\left( \ref{Eq2_3}\right) $. Assume $\kappa
_{2}<+\infty $ where $\kappa _{k}=:E\left\{ \varepsilon _{t}^{2k}\right\} $.

\begin{description}
\item[1.] If%
\begin{equation}
\rho \left( \prod\limits_{v=0}^{s-1}E\left\{ \phi _{s-v}^{\otimes 2}(%
\underline{\eta }_{t})\right\} \right) <1  \tag{4.1}  \label{Cond1}
\end{equation}%
then $\underline{Y}_{t}\in \mathbb{L}^{2}$.

\item[2.] Conversely, if $\rho \left( \prod\limits_{v=0}^{s-1}E\left\{ \phi
_{s-v}^{\otimes 2}(\underline{\eta }_{t})\right\} \right) \geq 1$, then
there is no strictly stationary solution $\left( \underline{Y}_{t}\right)
_{t\in \mathbb{Z}}$ to model $\left( \ref{Eq2_3}\right) $ such that $%
E\left\{ \underline{Y}_{t}^{\otimes 2}\right\} <+\infty .$
\end{description}
\end{theorem}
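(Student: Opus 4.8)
The plan is to reduce the second-order moment question for $\underline{Y}_t$ to the spectral radius of a single product of expected Kronecker-squared transition matrices, exactly mirroring the $\mathbb{L}^1$ analysis of Lemmas \ref{Lemma1} and \ref{Lemma2} but one tensor level up. First I would apply $Vec$ and the Kronecker identity $Vec(ABC')=(C\otimes A)Vec(B)$ to the recursion $\left(\ref{Eq2_4}\right)$, writing the $k$-th term of the series for $\underline{Y}_t$ as $\left\{\prod_{j=0}^{k-1}A(\underline{\eta}_{t-j})\right\}\underline{B}(\underline{\eta}_{t-k})$ and forming $\underline{Y}_t^{\otimes 2}$. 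Because $(\underline{\eta}_t)$ is i.i.d.\ and the matrices $A(\underline{\eta}_{t-j})$, $\underline{B}(\underline{\eta}_{t-k})$ appearing in a given term involve disjoint time indices, taking expectations factorizes: $E\left\{\left(\prod_{j=0}^{k-1}A(\underline{\eta}_{t-j})\right)^{\otimes 2}\underline{B}(\underline{\eta}_{t-k})^{\otimes 2}\right\}=\left(E\{A^{\otimes 2}(\underline{\eta}_0)\}\right)^{k}\,E\{\underline{B}^{\otimes 2}(\underline{\eta}_0)\}$, using $(MN)^{\otimes 2}=M^{\otimes 2}N^{\otimes 2}$. (Here I need $\kappa_2<\infty$ precisely so that $E\{A^{\otimes 2}(\underline{\eta}_0)\}$ and $E\{\underline{B}^{\otimes 2}(\underline{\eta}_0)\}$ have finite entries, since those matrices are quadratic in the $\eta_{st+v}=\varepsilon_{st+v}^2$.) Thus $E\{\underline{Y}_t^{\otimes 2}\}<\infty$ will follow, by monotone convergence applied to the non-negative series, once $\rho\!\left(E\{A^{\otimes 2}(\underline{\eta}_0)\}\right)<1$, and one also gets the cross terms under control by Cauchy–Schwarz or by directly bounding $E\{(\text{partial sum})^{\otimes 2}\}$.

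The second step is to identify $\rho\!\left(E\{A^{\otimes 2}(\underline{\eta}_0)\}\right)$ with $\rho\!\left(\prod_{v=0}^{s-1}E\{\phi_{s-v}^{\otimes 2}(\underline{\eta}_t)\}\right)$. This is the exact analogue of the computation in Lemma \ref{Lemma1} that gave $\det(I_{(ds)}-\lambda A)=\det(I_{(s)}-\lambda\prod_{v=0}^{s-1}\phi_{s-v})$: applying $(\cdot)^{\otimes 2}$ to the block structure of $A(\underline{\eta}_t)$, the matrix $E\{A^{\otimes 2}(\underline{\eta}_0)\}$ is again block-companion-like, with its only nontrivial block column built from products $\prod\phi_{s-v}^{\otimes 2}$, so its characteristic polynomial collapses to that of $\prod_{v=0}^{s-1}E\{\phi_{s-v}^{\otimes 2}(\underline{\eta}_0)\}$. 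One subtlety to record carefully: since the $\phi_{s-v}(\underline{\eta}_t)$ for different $v$ share the \emph{same} $\underline{\eta}_t$, the expectation does \emph{not} factor across $v$ inside a single period — i.e.\ $E\{\Phi^{\otimes 2}(\underline{\eta}_t)\}\ne\prod_v E\{\phi_{s-v}^{\otimes 2}\}$ in general — but across different periods $t$ the $\underline{\eta}_t$ \emph{are} independent, and it is only that cross-period independence the argument uses; the within-period product $\prod_{v=0}^{s-1}\phi_{s-v}^{\otimes 2}$ is kept intact under the expectation. I should double-check that the statement's matrix $\prod_{v=0}^{s-1}E\{\phi_{s-v}^{\otimes 2}(\underline{\eta}_t)\}$ is indeed meant with the expectation taken term-by-term in the product (this is what makes $\rho(\cdot)$ computable), and that this is what the $A^{\otimes 2}$ block reduction actually produces — it does, because consecutive $\phi$'s in $A(\underline{\eta}_t)$ already carry the same $\underline{\eta}_t$ while the powers $\{\prod\phi_{s-v}\}^{k}$ across the exponent $k$ carry independent $\underline{\eta}$'s.

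For Part 2, the converse, I would argue by contradiction along the lines of Lemma \ref{Lemma2}: if a strictly stationary solution with $E\{\underline{Y}_t^{\otimes 2}\}<\infty$ existed, iterating $\left(\ref{Eq2_3}\right)$ $n$ times, squaring via $\otimes 2$, taking expectations, and using non-negativity of all entries gives $E\{\underline{Y}_t^{\otimes 2}\}\ \ge\ \sum_{k=0}^{n}\left(E\{A^{\otimes 2}(\underline{\eta}_0)\}\right)^{k}E\{\underline{B}^{\otimes 2}(\underline{\eta}_0)\}$ for every $n$, so the non-negative series $\sum_k (E\{A^{\otimes 2}\})^k E\{\underline{B}^{\otimes 2}\}$ converges, forcing $(E\{A^{\otimes 2}\})^k E\{\underline{B}^{\otimes 2}\}\to 0$. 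Then, as in Lemma \ref{Lemma2}, I would use the explicit relations $\phi_v e_i=\alpha_i(v)(e_1+e_{p+1})+e_{i+1}$, $\phi_v e_{p+i}=\beta_i(v)(e_1+e_{p+1})+e_{p+i+1}$ — tensored with themselves — together with $\underline{B}_v=\alpha_0(v)(e_1+e_{p+1})$ and $\alpha_0(v)>0$, to propagate the convergence-to-zero from the $\underline{B}^{\otimes 2}$ direction onto every basis vector $e_i\otimes e_j$, concluding $(E\{A^{\otimes 2}(\underline{\eta}_0)\})^k\to 0$ and hence $\rho(E\{A^{\otimes 2}(\underline{\eta}_0)\})<1$, i.e.\ $\rho(\prod_{v=0}^{s-1}E\{\phi_{s-v}^{\otimes 2}(\underline{\eta}_t)\})<1$, contradicting the hypothesis. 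The main obstacle, and the step needing the most care, is this last spanning argument in the $\otimes 2$ setting: one must check that the Kronecker products of the "shift-plus-feedback" relations for $\phi_v$ genuinely reach \emph{all} of $e_i\otimes e_j$, $1\le i,j\le d$ (not just the diagonal ones $e_i\otimes e_i$), which requires also exploiting mixed terms $\underline{B}$-vector against $A^{\otimes 2}$-iterates — but the non-negativity of every matrix and vector in sight, inherited from the GARCH sign constraints $\alpha_i(v)\ge 0$, $\beta_j(v)\ge 0$, $\alpha_0(v)>0$, makes all the termwise domination inequalities go through just as in the $\mathbb{L}^1$ case.
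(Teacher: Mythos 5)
Your overall architecture coincides with the paper's: for part 1 you control the $\otimes 2$ moments of the successive terms of the series (equivalently, of the increments of the partial sums), show they are governed by powers of $A^{(2)}:=E\left\{ A^{\otimes 2}(\underline{\eta }_{0})\right\}$ and hence decay geometrically, and conclude $\mathbb{L}^{2}$ convergence; for part 2 you use non-negativity to bound $E\left\{ \underline{Y}_{t}^{\otimes 2}\right\}$ below by $\sum_{k}\left( A^{(2)}\right) ^{k}E\left\{ \underline{B}^{\otimes 2}(\underline{\eta }_{0})\right\}$ and then run a Lemma \ref{Lemma2}-type argument. This is essentially the paper's proof (whose part 2 is in fact terser than your sketch).

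There is, however, a genuine flaw at the pivotal identification $\rho \left( A^{(2)}\right) =\rho \left( \prod_{v=0}^{s-1}E\left\{ \phi _{s-v}^{\otimes 2}(\underline{\eta }_{t})\right\} \right)$. You assert that the expectation does \emph{not} factor across $v$ within a period, i.e.\ that $E\left\{ \Phi ^{\otimes 2}(\underline{\eta }_{t})\right\} \neq \prod_{v}E\left\{ \phi _{s-v}^{\otimes 2}(\underline{\eta }_{t})\right\}$ in general, and yet you claim the block reduction of $A^{(2)}$ produces the term-by-term product ``because consecutive $\phi$'s carry the same $\underline{\eta }_{t}$''. These two statements are mutually inconsistent: the only block of $A^{(2)}$ whose eigenvalues matter is precisely $E\left\{ \left( \prod_{v=0}^{s-1}\phi _{s-v}(\underline{\eta }_{t})\right) ^{\otimes 2}\right\} =E\left\{ \Phi ^{\otimes 2}(\underline{\eta }_{t})\right\}$, so if your non-factorization claim were correct your argument would only establish the theorem with $\rho \left( E\left\{ \Phi ^{\otimes 2}(\underline{\eta }_{0})\right\} \right)$ in place of the quantity appearing in $\left( \ref{Cond1}\right)$, and the stated result would not be proved. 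The missing observation is that $\phi _{v}(\underline{\eta }_{t})$ depends only on the single coordinate $\eta _{st+v}=\varepsilon _{st+v}^{2}$, and the coordinates of $\underline{\eta }_{t}$ are mutually independent since $\left( \varepsilon _{t}\right)$ is i.i.d.; hence $\phi _{1}(\underline{\eta }_{t}),\dots ,\phi _{s}(\underline{\eta }_{t})$ are mutually independent, the expectation \emph{does} factor within a period, $E\left\{ \Phi ^{\otimes 2}(\underline{\eta }_{t})\right\} =\prod_{v=0}^{s-1}E\left\{ \phi _{s-v}^{\otimes 2}(\underline{\eta }_{t})\right\}$, and the identification with $\left( \ref{Cond1}\right)$ goes through (the same within-period factorization is already used tacitly for the first-moment matrix $A$ in Lemma \ref{Lemma1}). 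With that correction your part 1 matches the paper; for part 2, the spanning argument over the basis vectors $e_{i}\otimes e_{j}$ that you rightly flag as the delicate step is left as a sketch, but the paper leaves it implicit as well.
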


\begin{proof}

\begin{description}
\item[1.] We first define the following $\mathbb{R}^{sd}-$valued stochastic
processes 
\begin{equation*}
\underline{S}_{n}(t)=\left\{ 
\begin{array}{l}
0\text{ \ \ \ \ \ \ \ \ \ \ \ \ \ \ \ \ \ \ \ \ \ \ \ \ \ \ \ \ \ \ \ \ \ \
\ \ \ if }n<0 \\ 
\underline{B}(\underline{\eta }_{t})+A(\underline{\eta }_{t})\underline{S}%
_{n-1}(t-1)\text{ \ \ \ \ \ if }n\geq 0,%
\end{array}%
\right.
\end{equation*}%
and for all $n\in \mathbb{Z}$: $\underline{\Delta }_{n}(t)=\underline{S}%
_{n}(t)-\underline{S}_{n-1}(t)$. It can be easily shown that, for all $n$ $%
\geq 0$, $\underline{S}_{n}(t)$ and $\underline{\Delta }_{n}(t)$ are
measurable functions of $\underline{\eta }_{t},\underline{\eta }_{t-1},...,%
\underline{\eta }_{t-n}$. Hence the processes $\left( \underline{S}%
_{n}(t)\right) _{t\in \mathbb{Z}}$ and $\left( \underline{\Delta }%
_{n}(t)\right) _{t\in \mathbb{Z}}$ are stationary. From the definition of $%
\underline{S}_{n}(t)$ and $\underline{\Delta }_{n}(t)$, we can verify that 
\begin{equation*}
\underline{\Delta }_{n}(t)=\left\{ 
\begin{array}{l}
0\text{ \ \ \ \ \ \ \ \ \ \ \ \ \ \ \ \ \ \ \ \ \ \ \ \ \ \ \ \ \ \ \ \ \ \
\ \ \ \ \ if }n<0 \\ 
\underline{B}(\underline{\eta }_{t})\text{ \ \ \ \ \ \ \ \ \ \ \ \ \ \ \ \ \
\ \ \ \ \ \ \ \ \ \ \ \ \ \ \ \ if }n=0 \\ 
A(\underline{\eta }_{t})\underline{\Delta }_{n-1}(t-1)\text{ \ \ \ \ \ \ \ \
\ \ \ \ \ \ \ \ \ if }n>0%
\end{array}%
\right.
\end{equation*}%
\noindent For all $n\in \mathbb{Z}$, define $\Gamma _{n}^{(2)}(t)=\underline{%
\Delta }_{n}^{\otimes 2}(t)=Vec\left\{ \underline{\Delta }_{n}(t)\underline{%
\Delta }_{n}^{\prime }(t)\right\} $. Using the properties of Kronecker
product, we obtain $\Gamma _{n}^{(2)}(t)=A^{\otimes 2}(\underline{\eta }%
_{t})\Gamma _{n-1}^{(2)}(t-1)$ for $n>0$ and $E\left\{ \Gamma
_{n}^{(2)}(t)\right\} =\left( A^{(2)}\right) ^{n}E\left\{ \Gamma
_{0}^{(2)}(t-n)\right\} =\left( A^{(2)}\right) ^{n}E\left\{ \underline{B}%
^{\otimes 2}(\underline{\eta }_{t-n})\right\} $, where $A^{(2)}:=E$ $\left\{
A^{\otimes 2}\left( \underline{\eta }_{t})\right) \right\} .$ Since $\rho
\left( A^{(2)}\right) =\rho \left( \prod\limits_{v=0}^{s-1}E\left\{ \phi
_{s-v}^{\otimes 2}(\underline{\eta }_{t})\right\} \right) <1$, we conclude
that $\underline{S}_{n}(t)$ converges in $\mathbb{L}^{2}$ and almost surely
to some limit $\underline{Y}_{t}\in \mathbb{L}^{2}$ which is the solution of
equation $(\ref{Eq2_3})$. This completes the proof.

\item[2.] From $\left( \ref{Eq3_2}\right) $ we obtain%
\begin{equation*}
E\left\{ \underline{Y}_{t}^{\otimes 2}\right\} \geq
\sum\limits_{k=0}^{\infty }E\left\{ \left\{ \prod\limits_{j=0}^{k-1}A(%
\underline{\eta }_{t-j})\right\} \underline{B}(\underline{\eta }%
_{t-k})\right\} ^{\otimes 2}=\sum\limits_{k=0}^{\infty }\left(
A^{(2)}\right) ^{k}\underline{B}^{(2)}
\end{equation*}%
where $\underline{B}^{(2)}:=E\left\{ \underline{B}^{\otimes 2}(\underline{%
\eta }_{t})\right\} $ and the conclusion follows.
\end{description}
\end{proof}

\noindent The result of the above theorem can be further extended to the
higher-order moments.

\begin{theorem}
\label{Theorem5} Let $\left( \underline{Y}_{t}\right) _{t\geq 0}$ be the
stationary solution of model $\left( \ref{Eq2_3}\right) $. Assume that $%
\kappa _{2(r-1)}<+\infty $ where $r>2$.

\begin{description}
\item[1.] If $\rho \left( \prod\limits_{v=0}^{s-1}E\left\{ \phi
_{s-v}^{\otimes r}(\underline{\eta }_{t})\right\} \right) <1$ then $%
\underline{Y}_{t}\in \mathbb{L}^{r}$.

\item[2.] Conversely, if $\rho \left( \prod\limits_{v=0}^{s-1}E\left\{ \phi
_{s-v}^{\otimes r}(\underline{\eta }_{t})\right\} \right) \geq 1$, then
there is no strictly stationary solution $\left( \underline{Y}_{t}\right)
_{t\in \mathbb{Z}}$ to model $\left( \ref{Eq2_3}\right) $ such that $%
E\left\{ \underline{Y}_{t}^{\otimes r}\right\} <+\infty .$
\end{description}
\end{theorem}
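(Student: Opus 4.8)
The plan is to mimic the argument of Theorem \ref{Theorem4} verbatim, replacing the second Kronecker power by the $r$-th Kronecker power throughout, and checking at each step that the moment hypothesis $\kappa_{2(r-1)}<+\infty$ is exactly what is needed to make the relevant expectations finite. First, for part 1, I would introduce the same truncated processes $\underline{S}_{n}(t)$ and the increments $\underline{\Delta}_{n}(t)$ as in the proof of Theorem \ref{Theorem4}, so that $\underline{\Delta}_{n}(t)=A(\underline{\eta}_{t})\underline{\Delta}_{n-1}(t-1)$ for $n>0$ and $\underline{\Delta}_{0}(t)=\underline{B}(\underline{\eta}_{t})$. Then I would set $\Gamma_{n}^{(r)}(t)=\underline{\Delta}_{n}^{\otimes r}(t)$ and use the mixed-product property of the Kronecker product to get the linear recursion $\Gamma_{n}^{(r)}(t)=A^{\otimes r}(\underline{\eta}_{t})\,\Gamma_{n-1}^{(r)}(t-1)$ for $n>0$. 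Taking expectations and using that $A(\underline{\eta}_{t})$ is independent of the past, $E\{\Gamma_{n}^{(r)}(t)\}=(A^{(r)})^{n}E\{\underline{B}^{\otimes r}(\underline{\eta}_{t-n})\}$ with $A^{(r)}:=E\{A^{\otimes r}(\underline{\eta}_{t})\}$.

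The key algebraic identity to establish is $\rho(A^{(r)})=\rho\!\left(\prod_{v=0}^{s-1}E\{\phi_{s-v}^{\otimes r}(\underline{\eta}_{t})\}\right)$, which follows exactly as in Lemma \ref{Lemma1}: the block structure of $A(\underline{\eta}_{t})$ is preserved under $\otimes r$ up to the block-triangular form, so the characteristic polynomial of $A^{(r)}$ factors through that of $\prod_{v=0}^{s-1}E\{\phi_{s-v}^{\otimes r}(\underline{\eta}_{t})\}$ (here one uses that the $\underline{\eta}_{t}$ appearing in the product $\Phi$ are all at the same time index, so the expectation of the Kronecker power of the product equals the product of the expectations of the Kronecker powers, the blocks $\phi_{v}$ within one period being functions of the same $\underline{\eta}_{t}$). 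Granting this, the spectral radius condition $\rho(A^{(r)})<1$ gives $\sum_{n\geq 0}(A^{(r)})^{n}<+\infty$ in norm, hence $E\{\Gamma_{n}^{(r)}(t)\}\to 0$ geometrically and $\sum_{n}\|E\{\Gamma_{n}^{(r)}(t)\}\|<+\infty$; this forces $\underline{S}_{n}(t)$ to be Cauchy in $\mathbb{L}^{r}$ and a.s. convergent to a limit $\underline{Y}_{t}\in\mathbb{L}^{r}$ solving $(\ref{Eq2_3})$, the latter being the unique solution by Lemma \ref{Lemma2} (note $\rho(A^{(r)})<1$ implies $\rho(\prod_{v=0}^{s-1}\phi_{s-v})<1$, since the entries of $\phi_{v}$ are dominated in the appropriate Kronecker sense — here one invokes $\kappa_{1}=1$ and Jensen). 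For part 2, I would start from the recursion $(\ref{Eq3_2})$, raise both sides to the $r$-th Kronecker power, use non-negativity of all the matrices and vectors involved to drop the cross terms and the remainder, take expectations, and obtain $E\{\underline{Y}_{t}^{\otimes r}\}\geq\sum_{k=0}^{\infty}(A^{(r)})^{k}\underline{B}^{(r)}$ with $\underline{B}^{(r)}:=E\{\underline{B}^{\otimes r}(\underline{\eta}_{t})\}$; since $\underline{B}^{(r)}$ has a strictly positive coordinate (because $\alpha_{0}(v)>0$) and $\rho(A^{(r)})\geq 1$, the series on the right diverges, contradicting $E\{\underline{Y}_{t}^{\otimes r}\}<+\infty$.

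The main obstacle is the bookkeeping for the finiteness of $A^{(r)}=E\{A^{\otimes r}(\underline{\eta}_{t})\}$ and of $\underline{B}^{(r)}$: the entries of $A^{\otimes r}(\underline{\eta}_{t})$ are monomials in the coordinates of $\underline{\eta}_{t}$ of total degree at most $r$ coming from a single period, but the embedding matrix $A(\underline{\eta}_{t})$ contains the products $\prod_{v=0}^{s-1}\phi_{s-v}(\underline{\eta}_{t})$, whose entries are already of degree up to $s$ in the $\eta_{st+v}$; however, each factor $\phi_{s-v}$ depends only on $\eta_{st+s-v}$, a \emph{distinct} coordinate, so within one period the relevant moments are products of $E\{\eta_{st+v}^{m}\}=\kappa_{m}$ over $m\leq r$ per coordinate, and the worst single moment needed is $\kappa_{r}$; a closer look at the $A^{\otimes r}$ block coming from $\Phi^{\otimes r}$ over consecutive periods shows the binding moment is in fact $\kappa_{2(r-1)}$ (the product of $r$ copies of a degree-up-to-2 polynomial from the innermost $A_{v}(\underline{\eta})$ block, accumulated along the worst lag path), which is precisely why the hypothesis is stated as $\kappa_{2(r-1)}<+\infty$ rather than $\kappa_{r}<+\infty$. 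I would spell out this degree count carefully — it is the only genuinely new ingredient beyond a mechanical transcription of the $r=2$ proof — and then the rest of the argument goes through unchanged. $\blacksquare$
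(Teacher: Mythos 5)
Your proposal is correct and coincides with the paper's own (omitted) argument, which is precisely the transcription you give: define $\underline{S}_{n}(t)$, $\underline{\Delta }_{n}(t)$ and $\Gamma _{n}^{(r)}(t)=\underline{\Delta }_{n}^{\otimes r}(t)$, use $\Gamma _{n}^{(r)}(t)=A^{\otimes r}(\underline{\eta }_{t})\Gamma _{n-1}^{(r)}(t-1)$ together with $\rho \left( A^{(r)}\right) =\rho \left( \prod\limits_{v=0}^{s-1}E\left\{ \phi _{s-v}^{\otimes r}(\underline{\eta }_{t})\right\} \right) $, and for the converse bound $E\left\{ \underline{Y}_{t}^{\otimes r}\right\} $ below by $\sum_{k\geq 0}\left( A^{(r)}\right) ^{k}\underline{B}^{(r)}$ exactly as in Theorem \ref{Theorem4}. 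One small correction to your side remark: the entries of $\phi _{v}(\underline{\eta }_{t})$ (including the $A_{v}(\underline{\eta }_{t})$ block) are \emph{linear}, not quadratic, in $\eta _{st+v}$, so the moment actually needed for $A^{(r)}$ and $\underline{B}^{(r)}$ to be finite is $\kappa _{r}$, which the stated hypothesis $\kappa _{2(r-1)}<+\infty $ implies by Jensen's inequality since $2(r-1)\geq r$ for $r\geq 2$; this inaccuracy does not affect the validity of your proof.
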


\begin{proof}
Define $\underline{S}_{n}(t)$ and $\underline{\Delta }_{n}(t)$ as in Theorem %
\ref{Theorem4} and let $\Gamma _{n}^{(r)}(t)=\underline{\Delta }%
_{n}^{\otimes r}(t)$. Since $\Gamma _{n}^{(r)}(t)=A^{\otimes r}(\underline{%
\eta }_{t})\Gamma _{n-1}^{(r)}(t-1)$ for $n>0$, the proof is similar to that
for the Theorem \ref{Theorem4} and thus we omit the details.
\end{proof}

\section{Covariance structure}

To get the covariance structure of the squared $PGARCH$ process, we first
assume that the Condition $\left( \ref{Cond1}\right) $ holds, this implies
that $\left( \ref{Eq2_2}\right) $ has a unique (in $\mathbb{L}^{2}$ sense) $%
PC$ solution. Taking expectation on both sides of $\left( \ref{Eq2_2}\right) 
$ and using the notation $\underline{\mu }_{k}(v)=E\left\{ \underline{y}%
_{st+v}^{\otimes k}\right\} $, $k=1,2$ gives%
\begin{equation}
\underline{\mu }_{1}(v)=\phi _{v}\underline{\mu }_{1}(v-1)+\underline{B}_{v}%
\text{, }v=1,...,s.  \tag{5.1}  \label{Mom1}
\end{equation}%
Iterating $\left( \ref{Mom1}\right) $ $s-$ times and requiring $\underline{%
\mu }_{k}(0)=\underline{\mu }_{k}(s)$, we obtain%
\begin{equation*}
\underline{\mu }_{1}(s)=\left( I_{(d)}-\dprod\limits_{v=0}^{s-1}\phi
_{s-v}\right) ^{-1}\dsum\limits_{j=0}^{s-1}\left\{
\dprod\limits_{v=0}^{j-1}\phi _{s-v}\right\} \underline{B}_{s-j}.
\end{equation*}%
Further manipulations in $\left( \ref{Mom1}\right) $ provide the desired
seasonal mean%
\begin{equation*}
\underline{\mu }_{1}(v)=\dsum\limits_{j=0}^{v-1}\left\{
\dprod\limits_{i=0}^{j-1}\phi _{v-i}\right\} \underline{B}_{v-j}+\left\{
\dprod\limits_{i=0}^{v-1}\phi _{s-i}\right\} \underline{\mu }_{1}(s),\text{ }%
v=1,...,s.
\end{equation*}%
The seasonal variance can be obtained as follow 
\begin{eqnarray*}
\underline{\mu }_{2}(v) &=&E\left\{ \left( \phi _{v}(\underline{\eta }_{t})%
\underline{y}_{st+v-1}+\underline{B}_{v}(\underline{\eta }_{t})\right)
\otimes \left( \phi _{v}(\underline{\eta }_{t})\underline{y}_{st+v-1}+%
\underline{B}_{v}(\underline{\eta }_{t})\right) \right\} \\
&=&\phi _{v}^{(2)}\underline{\mu }_{2}(v-1)+\underline{\varphi }_{v}
\end{eqnarray*}%
where\ $\underline{\varphi }_{v}=\underline{B}_{v}^{(2)}+\left( E\left\{
\phi _{v}(\underline{\eta }_{t})\otimes \underline{B}_{v}(\underline{\eta }%
_{t})+\underline{B}_{v}(\underline{\eta }_{t})\otimes \phi _{v}(\underline{%
\eta }_{t})\right\} \right) \underline{\mu }_{1}(v-1)$ with $\phi
_{v}^{(2)}:=E\left\{ \phi _{v}^{\otimes 2}(\underline{\eta }_{t})\right\} $
and $\underline{B}_{v}^{(2)}=E\left\{ \underline{B}_{v}^{\otimes 2}(%
\underline{\eta }_{t})\right\} $. Thus 
\begin{eqnarray*}
\underline{\mu }_{2}(s) &=&\left( I_{(d^{2})}-\dprod\limits_{v=0}^{s-1}\phi
_{s-v}^{(2)}\right) ^{-1}\dsum\limits_{j=0}^{s-1}\left\{
\dprod\limits_{v=0}^{j-1}\phi _{s-v}^{(2)}\right\} \underline{\varphi }%
_{s-j}, \\
\underline{\mu }_{2}(v) &=&\dsum\limits_{j=0}^{v-1}\left\{
\dprod\limits_{i=0}^{j-1}\phi _{v-i}^{(2)}\right\} \underline{\varphi }%
_{v-j}+\left\{ \dprod\limits_{i=0}^{v-1}\phi _{s-i}^{(2)}\right\} \underline{%
\mu }_{2}(s).
\end{eqnarray*}%
Now, noting that for any $h>0$, we have%
\begin{eqnarray*}
\underline{\gamma }_{v}(h) &=&E\left\{ \underline{y}_{st+v}\otimes 
\underline{y}_{st+v-h}\right\} \\
&=&E\left\{ \phi _{v}(\underline{\eta }_{t})\otimes I_{(d)}\right\} E\left\{ 
\underline{y}_{st+v-1}\otimes \underline{y}_{st+v-h}\right\} +\underline{B}%
_{v}\otimes \underline{\mu }_{1}(v-h) \\
&=&\left( \phi _{v}\otimes I_{(d)}\right) \underline{\gamma }_{v-1}(h-1)+%
\underline{B}_{v}\otimes \underline{\mu }_{1}(v-h) \\
&=&\dsum\limits_{k=0}^{h-1}\left\{ \dprod\limits_{i=0}^{k-1}\left( \phi
_{v-i}\otimes I_{(d)}\right) \right\} \underline{B}_{v-k}\otimes \underline{%
\mu }_{1}(v-h-k)+\left\{ \dprod\limits_{i=0}^{h-1}\left( \phi _{v-i}\otimes
I_{(d)}\right) \right\} \underline{\mu }_{2}(v-h).
\end{eqnarray*}%
In the above equations, $\phi _{v}$, $\underline{\mu }_{1}(v)$ and $%
\underline{\mu }_{2}(v)$ are interpreted periodically in $v$ with period $s$.

\begin{example}
\label{Example2}The simplest, but often very useful $PGARCH$ process is the $%
PGARCH(1,1)$ process. Further computations show that $\left( \ref{Eq3_1}%
\right) $ and $\left( \ref{Cond1}\right) $ reduce to $\prod\limits_{v=1}^{s}%
\theta _{1}(v)<1$ and $\prod\limits_{v=1}^{s}\theta _{2}(v)<1$\ respectively
where $\theta _{1}(v)=\alpha _{1}(v)+\beta _{1}(v)$ and where $\theta
_{2}(v)=\kappa _{2}\alpha _{1}^{2}(v)+\beta _{1}^{2}(v)+2\alpha _{1}(v)\beta
_{1}(v)$ which we henceforth assume. However, it is worth noting that the
existence of explosive regimes ( i.e. $\theta _{1}(v)>1$) does not preclude
the periodic stationarity. The seasonal moments are%
\begin{eqnarray*}
\mu _{1}(s) &=&\left( 1-\dprod\limits_{v=1}^{s}\theta _{1}(v)\right)
^{-1}\dsum\limits_{j=0}^{s-1}\left\{ \dprod\limits_{v=0}^{j-1}\theta
_{1}(s-v)\right\} \alpha _{0}(s-j) \\
\mu _{1}(v) &=&\dsum\limits_{j=0}^{v-1}\left\{
\dprod\limits_{i=0}^{j-1}\theta _{1}(v-i)\right\} \alpha _{0}(v-j)+\left\{
\dprod\limits_{i=0}^{v-1}\theta _{1}(v-i)\right\} \mu _{1}(s) \\
\mu _{2}(s) &=&\kappa _{2}\left( 1-\dprod\limits_{v=1}^{s}\theta
_{2}(v)\right) ^{-1}\dsum\limits_{j=0}^{s-1}\left\{
\dprod\limits_{v=0}^{j-1}\theta _{2}(s-v)\right\} \left\{ \alpha
_{0}^{2}(s-j)+2\alpha _{0}(s-j)\theta _{1}(s-v)\mu _{1}(s-j-1)\right\} \\
\mu _{2}(v) &=&\dsum\limits_{j=0}^{v-1}\left\{
\dprod\limits_{i=0}^{j-1}\theta _{2}(v-i)\right\} \left\{ \alpha
_{0}^{2}(v-j)+2\alpha _{0}(v-j)\theta _{1}(v-j)\mu _{1}(v-j-1)\right\}
+\left\{ \dprod\limits_{i=0}^{v-1}\theta _{2}(v-i)\right\} \mu _{2}(s).
\end{eqnarray*}%
The covariance structure of $PGARCH(1,1)$ process can be obtained as%
\begin{equation*}
\gamma _{v}(h)=\left\{ 
\begin{array}{l}
\alpha _{0}(v)\mu _{1}(v-1)+\theta _{1}(v)\mu _{2}(v-1)\text{, }h=1 \\ 
\alpha _{0}(v)\mu _{1}(v-1)+\theta _{1}(v)\gamma _{v-1}(h-1)\text{, }h\geq 2.%
\end{array}%
\right.
\end{equation*}%
When compared to the covariance function of second order $GARCH(1,1)$
process, the above formulas are quite complex. On can verify that these
expressions reduce to the classical $GARCH(1,1)$ forms when the\ $%
PGARCH(1,1) $ parameters are constant in $v$. In general, calculation in $%
PGARCH$ process are very tedious.
\end{example}

\section{Geometric ergodicity and strong mixing}

The basic tools presented here are drawn from \ the monograph by Doukhan $%
(1994)$. In this section we analyze the statistical properties of $PGARCH$
process, such as the geometric ergodicity and the strong mixing. These
concepts are fundamental in central limit theorem and law of large numbers\
which can be employed\ to derive asymptotic normality, consistency of
maximum likelihood style estimators and inference with the model.

\begin{definition}
Let $\left( \underline{X}_{t}\right) _{t\geq 0}$ be a discrete Markov chain
taking values in $\mathbb{R}^{k}$ $k\geq 1$ with time homogeneous $t$-step
transition probabilities i.e., $P^{t}\left( \underline{x},A\right) =P\left( 
\underline{X}_{t}\in A|\underline{X}_{0}=\underline{x}\right) $ where $%
\underline{x}\in \mathbb{R}^{k}$, $A\in $\QTR{cal}{B}$_{\mathbb{R}^{k}}%
\mathcal{\ }$and where \QTR{cal}{B}$_{\mathbb{R}^{k}}$ is a Borel $\sigma -$%
field on $\mathbb{R}^{k}$ with $P^{1}\left( \underline{x},A\right) =P\left( 
\underline{x},A\right) .$ Let $\pi $ be the invariant probability measure on 
$\left( \mathbb{R}^{k},\mathcal{B}_{\mathbb{R}^{k}}\right) \mathcal{\ }$,
i.e.,$\forall $ $A\in $\QTR{cal}{B}$_{\mathbb{R}^{k}}$: $\pi (A)=\dint P(%
\underline{x},A)\pi (d\underline{x})$.

\begin{description}
\item[a.] The chain $\left( \underline{X}_{t}\right) _{t\geq 0}$ is said to
be geometrically ergodic, if there exists some $0<r<1$ such that%
\begin{equation}
\forall \underline{x}\in \mathbb{R}^{k}\text{, }\left\Vert P^{t}\left(
x,.\right) -\pi (.)\right\Vert _{V}=o(r^{t})  \tag{6.1}  \label{Eq6_1}
\end{equation}%
where $\left\Vert .\right\Vert _{V}$ is the total variation norm.
Furthermore, if the chain $\left( \underline{X}_{t}\right) _{t\in \mathbb{Z}%
} $ is started with an initial distribution $\pi $, the process is strictly
stationary.

\item[b.] The $\beta -$mixing coefficients\ are defined by%
\begin{equation*}
\beta _{X}(k)=E\left\{ \sup_{B\in \sigma \left( \underline{X}_{t},t\geq
k\right) }\left\vert P(B|\sigma \left( \underline{X}_{t},t\leq 0\right)
)-P(B)\right\vert \right\} =\frac{1}{2}\sup
\dsum\limits_{i=1}^{I}\dsum\limits_{j=1}^{J}\left\vert P(A_{i}\cap
B_{j})-P(A_{i})P(B_{j})\right\vert
\end{equation*}%
where the supremum of the last equality is taken over all pairs of
partitions $\left\{ A_{1},...,A_{I}\right\} $ and $\left\{
B_{1},...,B_{I}\right\} $ of $\Omega $ such that $A_{i}\in \sigma \left( 
\underline{X}_{t},t\leq 0\right) $ for all $i$ and $B_{j}\in \sigma \left( 
\underline{X}_{t},t\geq k\right) $ for all $j$. The process $\left( 
\underline{X}_{t}\right) _{t\in \mathbb{Z}}$ is called $\beta $-mixing if $%
\lim\limits_{k\rightarrow \infty }$ $\beta _{X}(k)=0$.
\end{description}
\end{definition}

\noindent This powerful result helps us to develop asymptotic results, since
convergence in distribution is ensured for all measurable functions of the
chain. Perhaps one of the most well-known criterion used in establishing the
geometric ergodicity of a Markov chain is the drift condition\ developed in
Tweedie $(1975)$ and employed for the analysis of stochastic stability.

\begin{definition}
A drift function $g:\mathbb{R}^{k}\rightarrow \left[ 1,\infty \right[ $
satisfies a $1$-step geometrical drift criterion (relative to a Markov
chain) if there exists a compact $K\subset \mathbb{R}^{k}$ and a positive
constants $b$ and $0<\lambda <1$,%
\begin{equation*}
E\left\{ g(\underline{X}_{t})|\underline{X}_{t-1}=\underline{x}\right\} \leq
\left\{ 
\begin{array}{l}
\lambda g(\underline{x})\text{ if }\underline{x}\notin K \\ 
b\text{ if }\underline{x}\in K%
\end{array}%
\right.
\end{equation*}%
where $g$ is interpreted as a generalized energy function and the compact
set $K$ as the centre of attraction.
\end{definition}

\begin{remark}
One consequence of the geometric ergodicity is that the stationary Markov
chain $\left( \underline{X}_{t}\right) _{t\in \mathbb{Z}}$ is $\beta $%
-mixing, and hence strongly mixing, with geometric rate. Indeed, Davidov $%
(1973)$ showed that for an ergodic Markov chain $\left( \underline{X}%
_{t}\right) _{t\in \mathbb{Z}}$ with invariant probability measure $\pi $, \ 
$\beta _{X}(k)=\dint \left\Vert P^{k}\left( x,.\right) -\pi (.)\right\Vert
_{V}\pi (d\underline{x})$. Thus $\beta _{X}(k)=o\left( \rho ^{k}\right) $ if 
$\left( \ref{Eq6_1}\right) $ holds.
\end{remark}

\noindent In what follows, we shall assume, without loss of generality, that 
$p=q$, otherwise zeros can be filled. To derive the geometric ergodicity and 
$\beta -$mixing results, we must assume the following conditions for the
sequence $\left( \underline{\eta }_{t}\right) _{t\in \mathbb{Z}}$.

\begin{condition0}
\label{B.0} The i.i.d. sequence $\left( \underline{\eta }_{t}\right) _{t\in 
\mathbb{Z}}$ has a probability distribution function absolutely\ continuous
with respect to the Lebesgue measure and such that the density function take
positive values almost surely on its support $E\subset $ $\mathbb{R}^{s}$.
\end{condition0}

\noindent We are now ready to state the main result of this section which
establishes statistical properties of $PGARCH$ processes, in particular, we
focus on the $\beta -$mixing property with exponential decay.

\begin{theorem}
\label{Main_R}Under Conditions $\left( \mathbf{B.0}\right) $ and $\left( \ref%
{Eq3_1}\right) $ and assume that

\begin{description}
\item[1.] $\rho \left( \dprod\limits_{v=1}^{s}B_{v}\right) <1$

\item[2.] There are $0<r\leq 1$ and $\rho <1$ such that $E\left\{ \left\Vert
A(\underline{\eta }_{0})\right\Vert ^{r}\right\} \leq \rho $ and $\underline{%
\eta }_{0}\in \mathbb{L}^{r}$.
\end{description}

\noindent then the process $\left( \underline{Y}_{t}\right) _{t\in \mathbb{Z}%
}$ defined by $\left( \ref{Eq2_3}\right) $ is geometrically ergodic.
Moreover, if initialized from its invariant measure, $\left( \underline{Y}%
_{t}\right) _{t\in \mathbb{Z}}$ is strictly stationary and $\beta -$mixing
with exponential decay.
\end{theorem}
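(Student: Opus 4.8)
The plan is to verify that the Markov chain $\left(\underline{Y}_t\right)_{t\in\mathbb{Z}}$ satisfies the hypotheses of the standard geometric-ergodicity machinery (Tweedie's drift criterion together with irreducibility and aperiodicity), which in turn yields the $\beta$-mixing conclusion via the remark of Davidov $(1973)$ quoted above. First I would observe that $\left(\underline{Y}_t\right)$ is a homogeneous Markov chain on (a subset of) $\mathbb{R}^{sd}$: by $\left(\ref{Eq2_3}\right)$, $\underline{Y}_t=A(\underline{\eta}_t)\underline{Y}_{t-1}+\underline{B}(\underline{\eta}_t)$ with $\underline{\eta}_t$ i.i.d. and independent of $\underline{Y}_{t-1}$, so the transition kernel is well defined. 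The next step is to establish $\varphi$-irreducibility and aperiodicity of the chain: this is where Condition $\left(\mathbf{B.0}\right)$ enters, since the absolute continuity and positivity of the density of $\underline{\eta}_t$ on its support lets one show that, starting from any $\underline{x}$, the $k$-step transition law has a component absolutely continuous with respect to Lebesgue measure on a nonempty open set, uniformly in a neighbourhood; one then identifies a small set (e.g. a compact ball) and verifies the usual minorization. Here the structural hypothesis $p=q$ and the shift structure of the companion matrices $A_v(\cdot)$, $B_v(\cdot)$ are used to check that the noise $\underline{\eta}_t$ actually "spreads" in all $sd$ coordinates after finitely many steps — this is the standard controllability-type argument for GARCH companion forms.

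Next I would build the drift function. The natural candidate is $g(\underline{x})=1+\|\underline{x}\|^r$ with the same $r\in(0,1]$ as in hypothesis 2, using the subadditivity $\|u+v\|^r\le\|u\|^r+\|v\|^r$ valid for $0<r\le1$. Then
\begin{equation*}
E\left\{g(\underline{Y}_t)\mid\underline{Y}_{t-1}=\underline{x}\right\}\le 1+E\left\{\|A(\underline{\eta}_0)\|^r\right\}\|\underline{x}\|^r+E\left\{\|\underline{B}(\underline{\eta}_0)\|^r\right\}\le 1+\rho\|\underline{x}\|^r+C,
\end{equation*}
where $C:=E\left\{\|\underline{B}(\underline{\eta}_0)\|^r\right\}<\infty$ because $\underline{B}(\underline{\eta}_0)$ is affine in $\underline{\eta}_0$ and $\underline{\eta}_0\in\mathbb{L}^r$ by hypothesis 2. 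Since $\rho<1$, for any $\lambda$ with $\rho<\lambda<1$ one has $1+\rho\|\underline{x}\|^r+C\le\lambda(1+\|\underline{x}\|^r)=\lambda g(\underline{x})$ as soon as $\|\underline{x}\|^r\ge(1+C)/(\lambda-\rho)$, i.e. outside a compact ball $K$; and on $K$ the conditional expectation is bounded by some constant $b<\infty$. This is exactly the $1$-step geometric drift criterion, so Tweedie's theorem gives geometric ergodicity: $\|P^t(\underline{x},\cdot)-\pi(\cdot)\|_V=o(r^t)$ for some $0<r<1$, and an invariant probability $\pi$ exists.

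Finally, the $\beta$-mixing conclusion is immediate from the Remark following the definition: for a geometrically ergodic (hence ergodic) Markov chain started from $\pi$, Davidov's formula gives $\beta_Y(k)=\int\|P^k(\underline{x},\cdot)-\pi(\cdot)\|_V\,\pi(d\underline{x})$, and dominated convergence together with the geometric rate in $\left(\ref{Eq6_1}\right)$ yields $\beta_Y(k)=o(\rho^k)$; strict stationarity when initialized from $\pi$ is part of the definition. Two remarks on where the work concentrates. The role of hypothesis 1, $\rho\left(\prod_{v=1}^{s}B_v\right)<1$, together with $\left(\ref{Eq3_1}\right)$, is to guarantee that the chain does not get "stuck" on a lower-dimensional affine subspace — it is needed in the irreducibility step to ensure the $h$-component of $\underline{y}_{st+v}$ is genuinely randomized (the pure-$\beta$ block $B_v$ governs the $h$-recursion when the $\eta$'s are held fixed), so that the minorization holds on a set of positive Lebesgue measure in the full state space; I would spell this out via the explicit action $\phi_v e_i$ computed in the proof of Lemma \ref{Lemma2}. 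The main obstacle, and the step deserving the most care, is precisely this irreducibility/small-set verification: the drift computation is routine once $g$ is chosen, but showing $\psi$-irreducibility and aperiodicity for the $ds$-dimensional companion system — propagating the absolute continuity of $\underline{\eta}_t$ through the nonlinear map $(\underline{x},\underline{\eta})\mapsto A(\underline{\eta})\underline{x}+\underline{B}(\underline{\eta})$ into all coordinates after $s$ (or a few more) steps, and locating a compact small set — requires the controllability argument and is where hypotheses $\left(\mathbf{B.0}\right)$, $\left(\ref{Eq3_1}\right)$ and 1 are jointly consumed.
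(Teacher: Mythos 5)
Your drift step coincides with the paper's argument --- the same Lyapunov function $g(\underline{z})=1+\left\Vert \underline{z}\right\Vert ^{r}$, the same use of $E\left\{ \left\Vert A(\underline{\eta }_{0})\right\Vert ^{r}\right\} \leq \rho <1$ and of a compact ball --- and the passage from geometric ergodicity to exponential $\beta $-mixing via Davydov's formula is also the same. But the paper does not run the Meyn--Tweedie programme by hand: it obtains geometric ergodicity from Carrasco and Chen $(2002$, Theorem $1)$, checking their conditions: $\left( A_{0}\right) $ polynomial dependence of $A(\underline{\eta }_{t})$ and $\underline{B}(\underline{\eta }_{t})$ on the noise; $\left( A_{1}\right) $ $\rho \left( A(0)\right) <1$, which is exactly where hypothesis 1 is consumed since $\rho \left( A(0)\right) =\rho \left( \prod_{v=1}^{s}B_{v}\right) $; $\left( A_{2}^{\ast }\right) $ a.s.\ convergence of the series and of the products $\prod_{j=0}^{k-1}A(\underline{\eta }_{t-j})$ to zero, supplied by Lemma \ref{Lemma1} under $\left( \ref{Eq3_1}\right) $; and $\left( A_{3}^{\ast }\right) $ the drift bound; Condition $\left( \mathbf{B.0}\right) $ only serves to meet their assumption on the innovation density. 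In particular, hypothesis 1 is not there to ``randomize the $h$-component'': it is the contraction of the zero-noise skeleton required by $\left( A_{1}\right) $.

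The genuine gap is the half of your proof that you yourself identify as the crux: $\varphi $-irreducibility, aperiodicity and the minorization on a compact set are only promised, and the sketch as written would fail. The state vector $\underline{Y}_{t}$ is intrinsically degenerate in $\mathbb{R}^{sd}$: its blocks share lagged coordinates, for $v\geq 2$ the coordinate $h_{st+v}$ is an exact affine function of the coordinates of the preceding block of the \emph{same} vector (through $h_{st+v}=\alpha _{0}(v)+\sum_{i}\alpha _{i}(v)y_{st+v-i}+\sum_{j}\beta _{j}(v)h_{st+v-j}$), and only $s$ independent innovations enter per transition. Hence the $k$-step transition law never has a component absolutely continuous with respect to Lebesgue measure on an open subset of $\mathbb{R}^{sd}$, the noise cannot ``spread into all $sd$ coordinates'', and no hypothesis can prevent the chain from being confined to a lower-dimensional set --- it always is. A correct direct argument would have to establish $\psi $-irreducibility with respect to a measure carried by the attainable set (or pass to a reduced chain), which is precisely the technical work that the appeal to Carrasco and Chen's theorem is designed to bypass. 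A minor further slip: $\underline{B}(\underline{\eta }_{0})$ is not affine in $\underline{\eta }_{0}$ but a multilinear polynomial of degree up to $s$; the bound $E\left\Vert \underline{B}(\underline{\eta }_{0})\right\Vert ^{r}<\infty $ still follows from $\underline{\eta }_{0}\in \mathbb{L}^{r}$ because the coordinates $\eta _{st+1},\ldots ,\eta _{st+s}$ are independent and each monomial involves distinct coordinates.
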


\begin{remark}
The condition $E\left\{ \left\Vert A(\underline{\eta }_{0})\right\Vert
^{r}\right\} <1$ for some $0<r\leq 1$ in some neighborhood of zero is
satisfied\ if $E\left\{ \log \left\Vert A(\underline{\eta }_{0})\right\Vert
\right\} <0$ and $E\left\{ \left\Vert A(\underline{\eta }_{0})\right\Vert
^{\epsilon }\right\} <\infty $ for some $\epsilon >0$. Indeed, the function $%
\vartheta (v)=E\left\{ \left\Vert A(\underline{\eta }_{0})\right\Vert
^{v}\right\} $ has derivative $\vartheta ^{\prime }(0)=E\left\{ \log
\left\Vert A(\underline{\eta }_{0})\right\Vert \right\} <0$, hence $%
\vartheta (v)$ decreases in small neighborhood of zero, and since $\vartheta
(0)=1$ it follows that $\vartheta (r)<1$ for $0<$ $r\leq 1$. On the other
hand $E\left\{ \left\Vert A(\underline{\eta }_{0})\right\Vert ^{r}\right\}
<1 $ implies that $E\left\{ \log \left\Vert A(\underline{\eta }%
_{0})\right\Vert \right\} <0$ by Jensen's inequality.
\end{remark}

\begin{proof}
The proof is based on Carrasco and Chen $(2002,$ Theorem $1)$. Assumption $%
\mathbf{B.0}$ ensures that Carrasco and Chen's ($2002$ p. 20) condition on
the innovation term holds. We must then prove that $A\left( \underline{\eta }%
_{t}\right) $ and $\underline{B}\left( \underline{\eta }_{t}\right) $
satisfy their assumptions $\left( A_{0}\right) $, $\left( A_{1}\right) $, $%
\left( A_{2}^{\ast }\right) $ and $\left( A_{3}^{\ast }\right) $

\begin{condition1}
\label{A.0} Both $A\left( \underline{\eta }_{t}\right) $ and $\underline{B}%
\left( \underline{\eta }_{t}\right) $\ are polynomial functions of $%
\underline{\eta }_{t}$, therefore the measurability condition is trivially
satisfied.
\end{condition1}

\begin{condition2}
\label{A.1} We must show that $\rho \left( A(0)\right) <1$. It turns out
however that the nonzero eigenvalues of $A(0)$ are the nonzero eigenvalues
of $\Phi (0)$. It is not hard to see that then $\rho \left( \Phi (0)\right)
=\rho \left( \dprod\limits_{v=1}^{s}B_{v}\right) $ (see Horn and Johnson, $%
1999$, p. 68).
\end{condition2}

\begin{condition3}
\label{A*.2} That $\dsum\limits_{k\geq 0}\left\{
\dprod\limits_{j=0}^{k-1}A\left( \underline{\eta }_{t-j}\right) \right\} 
\underline{B}\left( \underline{\eta }_{t-k}\right) $ converges almost surely
to some constant immediately follows from Lemma $\ref{Lemma1}$. It remains
to show that $\left\{ \dprod\limits_{j=0}^{k-1}A\left( \underline{\eta }%
_{t-j}\right) \right\} X$ converges almost surely to zero for all $X\in 
\mathbb{R}^{ds}$ as $k\rightarrow \infty $.\ As $X$ is nonrandom,\ it
suffices to show that $\left\{ \dprod\limits_{j=0}^{k-1}A\left( \underline{%
\eta }_{t-j}\right) \right\} $ converges almost surely to zero as $%
k\rightarrow \infty $. Since $E\left\{ \dprod\limits_{j=0}^{k-1}A\left( 
\underline{\eta }_{t-j}\right) \right\} =A^{k}$, then the condition $\left( %
\ref{Eq3_1}\right) $ implies that $\dsum\limits_{k\geq 0}A^{k}<+\infty $ and
hence $\lim\limits_{k\rightarrow \infty }A^{k}=0$. Combining this result
with the fact that $\lim\limits_{k\rightarrow \infty }\left\{
\dprod\limits_{j=0}^{k-1}A\left( \underline{\eta }_{t-j}\right) \right\}
\geq 0$ yields that almost surely $\lim\limits_{k\rightarrow \infty }\left\{
\dprod\limits_{j=0}^{k-1}A\left( \underline{\eta }_{t-j}\right) \right\} =0$.
\end{condition3}

\begin{condition4}
\label{A*.3} Let $\underline{z}\in \mathbb{R}^{sd}$ and the Lyapunov
function $g:\mathbb{R}^{sd}\rightarrow \left[ 1,\infty \right[ $ by $g(%
\underline{z})=\left\Vert \underline{z}\right\Vert ^{r}+1$, we have 
\begin{eqnarray*}
E\left\{ g(\underline{Y}_{t})|\underline{Y}_{t-1}=\underline{y}\right\}
&=&E\left\{ \left\Vert A(\underline{\eta }_{t})\underline{Y}_{t-1}+%
\underline{B}(\underline{\eta }_{t})\right\Vert ^{r}|\underline{Y}_{t-1}=%
\underline{y}\right\} +1 \\
&\leq &E\left\Vert A(\underline{\eta }_{0})\right\Vert ^{r}\left\Vert 
\underline{y}\right\Vert ^{r}+E\left\Vert \underline{B}(\underline{\eta }%
_{0})\right\Vert ^{r}+1 \\
&\leq &\rho \left\Vert \underline{y}\right\Vert ^{r}+\kappa
\end{eqnarray*}%
where $\kappa $ is a positive constant such that $E\left\Vert \underline{B}(%
\underline{\eta }_{0})\right\Vert ^{r}\leq \kappa -1$. Choose $\lambda >0$
with $1-\lambda >\rho $ and set $M=\frac{\kappa }{1-\lambda -\rho }$ and
consider the compact $K=\left\{ \underline{y}\in \mathbb{R}^{sd}:\left\Vert 
\underline{y}\right\Vert \leq M^{\frac{1}{r}}\right\} $. For all $\underline{%
y}\notin K$, we have $\left( 1-\lambda -\rho \right) $ $\left\Vert 
\underline{y}\right\Vert ^{r}\geq \kappa $ and therefore%
\begin{equation}
E\left\{ g(\underline{Y}_{t})|\underline{Y}_{t-1}=\underline{y}\right\} \leq
\rho \left\Vert \underline{y}\right\Vert ^{r}+\left( 1-\lambda -\rho \right)
\left\Vert \underline{y}\right\Vert ^{r}\leq \left( 1-\lambda \right) g(%
\underline{y}).  \tag{6.2}  \label{Eq6_2}
\end{equation}%
When $\underline{y}\in K,$ we have%
\begin{equation}
E\left\{ g(\underline{Y}_{t})|\underline{Y}_{t-1}=\underline{y}\right\} \leq
\rho \left\Vert \underline{y}\right\Vert ^{r}+\kappa \leq \frac{\rho \kappa 
}{1-\lambda -\rho }+\kappa :=b  \tag{6.3}  \label{Eq6_3}
\end{equation}%
Combining $\left( \ref{Eq6_2}\right) $ and $\left( \ref{Eq6_3}\right) $ we
obtain%
\begin{equation*}
E\left\{ g(\underline{Y}_{t})|\underline{Y}_{t-1}=\underline{y}\right\} \leq
\left( 1-\lambda \right) g(\underline{y})+b\mathbf{I}_{K}(\underline{y})
\end{equation*}%
where $\mathbf{I}_{K}$ denotes the indicator function of the $K$.
\end{condition4}
\end{proof}

\section{Conclusion}

The paper partially extends\ $\mathbb{L}^{2}$ structures of the usual $GARCH$
model to periodic ones which allow the volatility of time series to have
different dynamics according to the model parameters which switches between $%
s$-regimes. Our study is based (in a multivariate framework) on a
generalized autoregressive representation which we are preferred for a
periodic $ARMA$ ($PARMA)$ representation. The main advantage of the approach
is that, besides its simplicity, it preserve the mathematically tractable $%
GARCH$ structure when $s=1$. A thorough examination of the\ $\mathbb{L}^{2}$
structures of the series and its powers revealed that, under appropriate
moment conditions, these structures were those of periodic $AR$ ($PAR)$
processes. Beside the conditions ensuring the existence and uniqueness of
strictly stationary solution of $PGARCH$, we have also gave sufficient
conditions for the $PGARCH$ processes to belong to $\mathbb{L}^{p}$, $p\geq
1 $. Some fundamental probabilistic properties such as the $\beta $-mixing
and the geometric ergodicity with exponential decay have been studied.

\end{document}